\documentclass{amsart}
\textwidth=5.5in
\hoffset=-.3in

\usepackage{amsmath, amsthm, amsfonts, amssymb}
\usepackage{mathtools}

\usepackage{hyperref}

\hypersetup{
	colorlinks=true,
	linkcolor=blue,
	citecolor=red,
}

\newtheorem{defn}{Definition}[section]
\newtheorem{theorem}[defn]{Theorem}
\newtheorem{lemma}[defn]{Lemma}
\newtheorem{prop}[defn]{Proposition}

\newtheorem{example}[defn]{Example}


\newcommand{\calA}{\mathcal{A}}
\newcommand{\calB}{\mathcal{B}}
\newcommand{\calD}{\mathcal{D}}
\newcommand{\calE}{\mathcal{E}}
\newcommand{\calJ}{\mathcal{J}}
\newcommand{\calK}{\mathcal{K}}
\newcommand{\calL}{\mathcal{L}}
\newcommand{\calS}{\mathcal{S}}
\newcommand{\calU}{\mathcal{U}}
\newcommand{\calV}{\mathcal{V}}
\newcommand{\calW}{\mathcal{W}}

\DeclareMathOperator{\rk}{\mathrm{rk}}
\DeclareMathOperator{\cork}{\mathrm{co-rk}}

\begin{document}
\title{Stability of certain Engel-like Distributions}

\author{Aritra Bhowmick}

\address{Statistics and Mathematics Unit, Indian Statistical Institute\\ 203,
B.T. Road, Calcutta 700108, India.\\ e-mail:
avowmix@gmail.com}

\begin{abstract}
In this article we introduce a higher dimensional analogue of Engel structure, motivated by the Cartan prolongation of contact manifolds. We study the stability of such structure, generalizing the Gray-type stability for Engel manifolds.
\end{abstract}

\maketitle
\date{}

\section{Introduction}
In \cite{montGeneric} Montgomery proved that a generic rank $r$ distribution on a manifold of dimension $n$ is not stable if $r(n-r) > n$. Among the cases that are excluded by this inequality are line fields (when $r=1$), contact and even contact structures (when $r=n-1$) and lastly Engel structures (when $r=2, n=4$). An Engel structure is a rank $2$ distribution $\calD$ on a $4$-manifold $M$ such that $\calD^2$ is rank $3$ distribution and $\calD^3=TM$. Like contact structures, any Engel structure is locally given as the common kernel of two $1$-forms (see \cite{montEngelDeform}) $$dz-ydx, \qquad dy - wdx$$
But unlike contact structures, Engel structures are not stable under arbitrary isotopy. In fact, any Engel structure $\calD$ defines a complete flag $$\calL\subset\calD\subset\calE,$$
where the line field $\calL$, called the characteristic line field, is usually not stable under isotopy. Golubev proved a modified version of Gray-type theorem for Engel structures in \cite{golubev}.\\

Engel manifolds are closely related to contact 3-manifolds. Starting with a 3-dimensional manifold with a contact structure $\xi$, one obtains a circle bundle by Cartan prolongation of $\xi$, where the total space of the bundle carries an Engel structure $\mathcal D$ with its characteristic line field tangent to fibers (\cite{montEngelDeform}). Prolongation on an arbitrary contact manifold $(N^{2n+1},\xi)$ give rise to fiber bundles $M\to N$ with fiber $\mathbb R P^{n-1}$. The total space of the bundle supports a flag $\calL\subset\calD\subset\calE\subset TM$ on $M$, with rank vector $(2n-1, 2n, 4n-1, 4n)$, where
\[\calD^2=\calE,\ \calD^3=TM,\]
and $\calL$ is the Cauchy characteristic distribution (see~\ref{defnCauchyDist}) of $\calE$. In general, if we have a flag $\mathcal D\subset \calE$ satisfying $\calD^2=\calE$ and $\calD^3=TM$, then it does not necessarily follow that the Cauchy characteristic distribution $\mathcal L$ is contained in $\mathcal D$ (Example~\ref{exNonExample}).
Motivated by these examples, we introduce the notion of generalized Engel structures on a manifold $M$. These are
distributions $\calD$ of even co-rank, such that $\calE=\calD^2$ is a co-rank $1$ distribution, $\calD^3 = TM$
and the Cauchy characteristic distribution $\calL$ of $\calE$ is contained in $\calD$ and has co-rank $1$ in $\calD$.
The distribution $\mathcal L$ is referred as the characteristic distribution of $\mathcal D$. The main goal of this article is to demonstrate a Gray-type stability property of these distributions
\begin{theorem}
	\label{thmGrayGeneral}
Let $\calD_t$, $0\leq t\leq 1$, be a smooth one-parameter family of generalized Engel distributions on a closed manifold $M$. Assume that the characteristic distribution $\calL_t$ of $\calD_t$ is independent of $t$ and say $\calL=\calL_t$ for all $t$. Then there exists an isotopy $\phi_t$ of $M$ such that $$\phi_{t*}\calD_t = \calD_0,\qquad \phi_{t*}\calL = \calL$$
\end{theorem}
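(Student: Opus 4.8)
\noindent The plan is to run Moser's deformation method. One looks for a smooth time‑dependent vector field $X_t$ on $M$ and recovers $\phi_t$ by integrating it. The second conclusion then comes for free: $\calE_t=\calD_t^2$ and the Cauchy characteristic $\calL_t$ of $\calE_t$ are natural under diffeomorphisms, so $\phi_{t*}\calD_t=\calD_0$ automatically forces $\phi_{t*}\calE_t=\calE_0$ and hence $\phi_{t*}\calL=\phi_{t*}\calL_t=\calL_0=\calL$ (using $\calL_t\equiv\calL$). Passing to annihilators, $\phi_{t*}\calD_t=\calD_0$ is equivalent to $\phi_t^{*}\calD_0^{\perp}=\calD_t^{\perp}$, and differentiating this in the usual way reduces the problem to solving, for each $t$, the homological equation
\[
d\bigl(\iota_{X_t}\gamma\bigr)+\iota_{X_t}d\gamma-\dot\gamma\ \in\ \Gamma(\calD_t^{\perp}),\qquad\gamma\in\Gamma(\calD_t^{\perp}).
\]

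The first use of the hypothesis is that, since $\calL\subseteq\calD_t$ and $\calL$ does not depend on $t$, the annihilator $\calD_t^{\perp}$ is a path of hyperplane sub‑bundles of the \emph{fixed} bundle $\calL^{\perp}$; hence a smooth coframe of $\calD_t^{\perp}$ given by $\alpha_t$ (spanning the line $\calE_t^{\perp}$) together with further $1$‑forms $\beta_t^{i}$ automatically satisfies $\dot\alpha_t,\dot\beta_t^{i}\in\Gamma(\calL^{\perp})$. (If some of the bundles involved fail to be trivial one first passes to a finite cover, which does not change the statement.) Two algebraic features of a generalized Engel flag then organize the solution. Because $\calL$ is \emph{exactly} the Cauchy characteristic of $\calE_t=\ker\alpha_t$, the form $d\alpha_t|_{\calE_t}$ has kernel precisely $\calL$ and descends to a symplectic form $\omega_t$ on $\calE_t/\calL$, in which the line $\calD_t/\calL$ is isotropic (as $\ker\alpha_t$ contains $[\calD_t,\calD_t]$). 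Because $\calD_t^2=\calE_t$, bracketing with a local generator $Y_0$ of $\calD_t/\calL$ defines a bundle isomorphism $\rho_t\colon\calL\xrightarrow{\ \sim\ }\calE_t/\calD_t$, $Y\mapsto[Y,Y_0]\bmod\calD_t$; and the Cauchy‑characteristic property also gives a flat partial (Bott) connection $\nabla$ along $\calL$ on $\calE_t/\calL$, which in general does not preserve the sub‑line $\calD_t/\calL$.

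With the ansatz $X_t\in\Gamma(\calE_t)$ and the notation $\overline{X}_t:=X_t\bmod\calL$, one evaluates the homological equation on the splitting $\calD_t=\calL\oplus\langle Y_0\rangle$, using $\iota_{X_t}\alpha_t=0$ and $\dot\alpha_t,\dot\beta_t^{i}\in\Gamma(\calL^{\perp})$. The equation decouples into three parts. (i) The $\alpha_t$‑component, evaluated on $Y_0$, collapses to the single relation $\omega_t(\overline{X}_t,\overline{Y}_0)=\dot\alpha_t(Y_0)$ — the direct analogue of the contact Gray computation — while on $\calL$ it is automatically satisfied; this pins $\overline{X}_t$ down modulo the symplectic orthogonal $(\calD_t/\calL)^{\omega_t}$. (ii) The $\beta_t^{i}$‑components, evaluated on $Y_0$, become via the isomorphism $\rho_t$ a \emph{pointwise} linear system which, once $\overline{X}_t$ is fixed, uniquely determines $X_t$ itself. (iii) The $\beta_t^{i}$‑components, evaluated on $\calL$, reduce to $\nabla_Y\overline{X}_t\in\Gamma(\calD_t/\calL)$ for all $Y\in\Gamma(\calL)$, equivalently $[X_t,\Gamma(\calL)]\subseteq\Gamma(\calD_t)$. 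Parts (i) and (ii) are purely algebraic and always solvable; the real content is (iii): one must use the freedom left in $\overline{X}_t$ by (i), namely a section of $(\calD_t/\calL)^{\omega_t}$, so that $\nabla\overline{X}_t$ stays tangent to the line $\calD_t/\calL$ along the leaves of $\calL$.

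I expect (iii) to be the main obstacle. When $\calL$ has rank $1$ (the Engel case) it is an ordinary first‑order ODE along the leaves of $\calL$ and is automatically solvable; in general it is an over‑determined first‑order linear system along the leaves, and its integrability must be checked. This is precisely where the hypothesis $\calL_t\equiv\calL$ is essential: it freezes the transverse geometry (there is no $\dot\calL$‑term to compensate), keeps the whole problem inside the fixed bundle $\calL^{\perp}$, and makes $\nabla$, $\omega_t$, $\rho_t$ vary coherently in $t$; flatness of $\nabla$ together with the isomorphism $\rho_t$ should be what force the integrability conditions to hold. Concretely I would reduce to the local model in which $\calL$ integrates to a fibration $\pi\colon M\to N$ onto a contact manifold, $\calE_t=\pi^{*}\xi_t$ for a path of contact structures $\xi_t$ on $N$, and $\calD_t=(d\pi)^{-1}(\sigma_t)$ for a fibrewise covering $\sigma_t$ of the projectivized contact bundle $\mathbb{P}(\xi_t)\to N$ (the differential of this covering along the fibres being exactly $\rho_t$); in that model (iii) is the linearization of the two‑step procedure "first straighten $\xi_t$ by ordinary Gray stability on $N$, then cancel the residual discrepancy of the two covering sections by a fibrewise motion, which is possible because the sections are joined by a path and hence induce the same homomorphism on $\pi_1$ of a fibre". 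The points that I anticipate needing the most care are the global assembly of the leafwise solutions over $M$ (via a partition of unity) and the behaviour along leaves of $\calL$ which need not be compact when $M$ is merely closed — so the cleanest route is probably to carry out the whole argument intrinsically, in terms of $\nabla$, $\omega_t$ and $\rho_t$, rather than through the fibration model.
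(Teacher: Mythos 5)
Your overall framework (Moser--Gray deformation, reduction to a homological equation for a generating field $X_t$, and the observation that $\phi_{t*}\calL=\calL$ comes for free by naturality of $\calD\mapsto\calD^2\mapsto$ Cauchy characteristic) matches the paper's, and your decomposition of the homological equation into the pieces (i)--(iii) is a correct analysis of what a \emph{single} field $X_t\in\Gamma(\calE_t)$ would have to satisfy. But the argument is not complete: part (iii) --- the requirement $[X_t,\Gamma(\calL)]\subseteq\Gamma(\calD_t)$, an over-determined first-order linear system along the leaves of $\calL$ constraining the residual freedom $(\calD_t/\calL)^{\omega_t}$ in $\overline{X}_t$ --- is exactly the hard point, and you leave it at ``its integrability must be checked'' and ``flatness of $\nabla$ together with $\rho_t$ \emph{should} force the integrability conditions to hold.'' No verification is given, and the fibration model you fall back on is only local (the foliation $\calL$ on a closed manifold need not fibre and its leaves need not be compact, as you yourself note), so it cannot by itself close the argument.

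The paper sidesteps (iii) entirely by factoring the isotopy into two stages, and this factorization is the idea your proposal is missing. First (Lemma~\ref{lemmaIsotopyOfE}) one runs the Gray argument for the even contact structure $\calE_t=\calD_t^2$ alone: the generating field lies in a complement $\calV_t$ of $\calL$ inside $\calE_t$, where $d\theta_t$ is nondegenerate, and only your equation (i) has to be solved --- no $\beta^i$-equations appear because $\calD_t$ is not yet being tracked. This yields $\phi_t$ with $\phi_{t*}\calE_0=\calE_t$, and one replaces $\calD_t$ by $\calD_t'=\phi_{t*}^{-1}\calD_t$, which satisfies $\calD_t'^2=\calE_0$ for all $t$. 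Second (Theorem~\ref{thmIsotopyOfD}) one moves $\calD_t'$ inside the \emph{fixed} $\calE_0$ by a field $X_t\in\Gamma(\calL)$: for such a field your condition (iii) is vacuous, since $[X_t,L]\in[\calL,\calL]\subset\calL\subset\calD_t'$ by integrability of $\calL$ and $\frac{d}{dt}\omega^i_t|_{\calL}=0$; what remains is precisely your pointwise system (ii), solved via the surjectivity of $L\mapsto[Z,L]\bmod\calD_t'$ (your $\rho_t$) together with a rank count for the kernels $\calK^i_t$. The composite isotopy does solve your coupled system a posteriori, but it is the two-step factorization that makes the existence proof elementary; to salvage the one-step approach you would have to actually establish the leafwise integrability in (iii), and as written that step is a gap.
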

We also obtain a local normal form for a set of generators of the annihilating ideal of a generalized Engel distribution $\mathcal D$.\\

The article is organized as follows: In Section 2 we recall some basic notions about distributions. In Section 3, we introduce generalized Engel structures and describe the Pfaffian system defining them. In Section 4 and 5 we prove the main results of this article.

\section{Basic Notions and Examples}
Given any distribution $\calA$ on a manifold $M$, we can think of it as a sheaf of local sections of the sub-bundle $\calA\subset TM$. Given two distributions $\calA,\calB$, define by $[\calA,\calB]$ as the sheaf of vector fields obtained by taking Lie brackets of local sections. Using this notation, recursively define, $$\calD^{i+1}=\calD + [\calD, \calD^i], \qquad \calD^1 = \calD$$
At every $x\in M$ we have the integer $q_i(x)=\dim \calD^i_x$, where $\calD^i_x$ is the stalk at the point $x$. Note that $\calD^i$ defines a distribution if the integer $q_i(x)$ is locally constant. The integer sequence $(q_i(x))_i$ is called the \textit{growth vector} for the distribution $\calD$ at $x$. A distribution is \emph{regular} if the growth vector is independent of the point $x$. A regular distribution $\calD$ is called \textit{nonholonomic} if there is an integer $k$ such that $TM=\calD^k$. In this article, we only consider nonholonomic distributions in the above sense.

Before moving onto some examples, we recall the definition of Cauchy characteristic distribution (\cite{bryantExteriorDiff}), as it will play an important role in understanding the generalized Engel distributions.
\begin{defn}
	\label{defnCauchyDist}
	{\em Given a co-rank $1$ distribution $\calE$ on a manifold $M$, consider the collection, $$\calL = \Big\{X\in\calE \Big| [X,Y] \in\calE \text{ for all }Y\in \calE\Big\}$$ If $\calL$ has constant rank everywhere it is called the \textit{Cauchy characteristic distribution} of $\calE$.}
\end{defn}
We can locally define $\calL$ as follows. Suppose, $\calE\underset{loc.}{=}\ker\theta$. Then, $$\calL\underset{loc.}{=}\ker d\theta|_\calE=\Big\{X\in\calE \Big| d\theta(X,Y)=0, \forall Y\in\calE\Big\}$$
It is easy to see that Cauchy characteristic distribution is integrable. Indeed if $X,Y\in \calL$ and $Z\in\calE$, then we have, $$[[X,Y],Z] = [X,[Y,Z]] - [Y, [X,Z]]$$
Now, $[X,Z],[Y,Z]\in\calE$ and hence $[[X,Y],Z]\in\calE$. Thus $[X,Y]\in\calL$. But then $\calL$ is integrable by Frobenius Theorem.

\begin{example}\quad

\em{
\begin{enumerate}
	\item[(a)] A contact distribution $\xi$ on an odd-dimensional manifold $M$ is a co-rank $1$ distribution such that $\xi^2=TM$ and the Cauchy characteristic distribution of $\xi$ is trivial.
	
	\item[(b)] Similarly, an even contact structure on an even-dimensional manifold $M$ is a co-rank $1$ distribution $\calE$ such that $\calE^2=TM$ and the Cauchy characteristic distribution of $\calE$ is a line field. Like contact structures, an even contact structure $\calE$ is locally given as the the kernel of some $1$-form $\alpha$ satisfying $\alpha\wedge d\alpha^n \ne 0$. \\
	
	\item[(c)] An Engel structure $\calD$ is a co-rank $2$ nonholonomic distribution on a $4$-dimensional manifold $M$, such that $\calD^2$ is an even contact structure and $\calD^3=TM$. The characteristic line field $\calL$ of $\calD^2$ turns out to be contained in $\calD$ (see \cite{montEngelDeform}). Thus the Engel structure completely defines the flag $\calL\subset\calD\subset\calD^2\subset TM$. Any Engel structure $\calD$ can be locally realized as the kernel of two $1$-forms, $$dz-ydx,\qquad dy-wdx.$$
\end{enumerate}
}
\end{example}

We are particularly interested in distributions in higher dimensions, which exhibit properties similar to Engel structures.

\subsection{Cartan Prolongation}
A prime example of Engel manifold appears as Cartan prolongation of contact $3$-manifolds $(M,\xi)$. On $M$ we construct a fiber bundle, with total space $\mathbb{P}(\xi)$ and fibers $\mathbb{RP}^1$. There is a canonical Engel distribution on the total space, where the characteristic line field is along the fibers. We describe below the Cartan prolongation of an arbitrary contact manifold.\\

Consider an odd dimensional manifold $N^{2n+1}$ with a contact structure $\xi$. On $N$ we construct the Grassmann bundle $$\mathbb{RP}^{2n-1} \hookrightarrow \mathbb{P}\xi \overset{\pi}{\to} N,$$
wher the fiber over a point $x\in N$ is the projective space of lines in the vector space $\xi_x$. The total space $Q=\mathbb{P}\xi$ is of dimension $4n$. The inverse image of $\xi$ defines a corank $1$ distribution $\calE = d\pi^{-1}(\xi)$ on $Q$ which is easily seen to be an even contact structure. On the other hand, there is a distribution $\mathcal D$ which is obtained as follows: At a point $[\ell]\in Q$, where $\ell$ is a line in $\xi_p$ for $p\in N$, define $\calD_{[\ell]} = d\pi|_{[l]}^{-1}(\ell)$. Since $\pi$ is a submersion, $\calD$ is a co-rank $2n$ distribution on $Q$. Clearly, $\calD\subset\calE$. Set $\calL$ as the vertical sub-bundle of $TQ$ over $N$, i.e., $\calL$ is tangent along the fibers. Thus, we have a flag, $\calL\subset\calD\subset\calE$. The distribution $\calD$ is called the prolongation of $\xi$. In particular, if $n=1$ and $N$ is a contact $3$ manifold, then $\dim Q=4$ and $\calD$ is an Engel structure on $Q$.\\

We can now observe a few general properties of this flag. Since $\xi$ is locally expressed as $\ker (dz - \sum_{i=1}^n y_idx_i)$ we have
\[\xi \underset{loc.}{=} \Big\langle\partial_{y_i}, P_i = \partial_{x_i} + y_i\partial_z \Big| i=1\ldots n\Big\rangle\]
Any line $\ell\subset\xi_p$ is represented by a no-trivial linear combination of these vectors. Hence, on $Q$ we can introduce new homogeneous coordinates along the fiber as
\[\{a_1,\ldots,a_n,b_1,\ldots,b_{n-1}\}.\]
At any point on $Q$, we can associate a uniquely defined vector,
\[Z = P_n + \sum_{i=1}^n a_i \partial_{y_i} + \sum_{j=1}^{n-1} b_jP_j.\]
Then we can describe the flag $\calL\subset\calD\subset\calE$ locally as follows,
\begin{align*}
\calL &= \langle \partial_{a_1},\ldots,\partial_{a_n},\partial_{b_1},\ldots,\partial_{b_{n-1}} \rangle\\
\calD &= \calL \oplus \langle Z \rangle\\
\calE &= \calD \oplus \langle \partial_{y_1},\ldots,\partial_{y_n},P_1,\ldots,P_{n-1} \rangle\\
TQ &= \calE \oplus \langle \partial_z \rangle
\end{align*}
From this description we observe that
\begin{itemize}
	\item $\cork\calD$ is even, $\cork\calE$ is $1$ and co-rank of $\calL$ in $\calD$ is $1$
	\item $\calD^2=\calE, \calD^3=TM$
	\item $\calL$ is the Cauchy characteristic distribution $\calE$ so that $[\calL,\calE]\subset\calE$.
\end{itemize}

\section{Generalized Engel structure}
Motivated by the Cartan prolongation of a contact structure, we define a generalized Engel structure.
\begin{defn}
	\label{defnGenEngel}
	{\em A \emph{generalized Engel structure} or an Engel-like distribution on a manifold $M$ is a distribution $\calD$ of even co-rank, such that
	\begin{enumerate}
		\item $\calE=\calD^2$ is a co-rank $1$ distribution
		\item $\calD^3 = TM$
		\item $\calL$, the Cauchy characteristic distribution of $\calE$, is contained in $\calD$
		\item $\calL$ has co-rank $1$ in $\calD$.
	\end{enumerate}
Thus, we have a flag
	\[
	\rlap{$\underbrace{\phantom{\calL\subset\calD}}_{\cork=1}$}\calL\subset
	\overbrace{\calD\subset\rlap{$\underbrace{\phantom{\calE\subset TM}}_{\cork=1}$}\calE \subset TM}^{\textrm{even }\cork}
	\]
	The distribution $\calL$ will be called the \emph{characteristic distribution} of the generalized Engel distribution $\calD$}.
\end{defn}

\subsection{A remark on the definition}
When $\dim M=4$ and $\calD$ is of co-rank $2$, we have an Engel structure. As mentioned earlier, in this case the Cauchy characteristic distribution $\calL$ of $\calE=\calD^2$ is completely determined by $\mathcal D$ and it is contained in $\calD$. For higher co-rank we can not expect this to happen in general and this can be seen from the examples below. In the first two examples $\calL\not\subset\calD$ and in the third one co-rank of $\calL$ in $\calD$ is not of co-rank $1$. All the examples are constructed over $\mathbb{R}^8$, where the coordinates are understood from the context.

\begin{example}\label{exNonExample}\quad

\em{
\begin{enumerate}
	\item[(a)] Suppose, $\mathcal{D} = \langle\partial_x,\partial_y,\partial_z, \partial_w + x\partial_{x_1} + y\partial_{y_1} + z\partial_{z_1} + z_1\partial_t\rangle$.
	Then, $[\mathcal{D},\mathcal{D}] = \langle\partial_{x_1},\partial_{y_1},\partial_{z_1}\rangle$
	and hence, $$\mathcal{E} = \mathcal{D}^2 = \langle\partial_x,\partial_y,\partial_z,\partial_{x_1},\partial_{y_1},\partial_{z_1},\partial_w + z_1\partial_t\rangle$$
	Lastly, $[\mathcal{D},\mathcal{D}^2] = \langle \partial_t\rangle$ and so $\mathcal{D}^3=TM$. The Cauchy characteristic distribution of $\calE$ is $\mathcal{L} = \langle\partial_x,\partial_y,\partial_z,\partial_{x_1},\partial_{y_1}\rangle$ which is a rank $5$ distribution. Clearly in this case we have $\calL\not\subset\calD$.
	\item[(b)] Consider, $\mathcal{D} = \langle\partial_w, \partial_{x_1} + w\partial_{y_1} + y_1\partial_z, \partial_{x_2} + w\partial_{y_2} + y_2\partial_z, \partial_{x_3} + w\partial_{y_3}\rangle$
	Then, $[\mathcal{D},\mathcal{D}] = \langle\partial_{y_1},\partial_{y_2},\partial_{y_3}\rangle$ and so, $$\mathcal{E} = \mathcal{D}^2 = \langle\partial_w, \partial_{y_1},\partial_{y_2},\partial_{y_3}, \partial_{x_1} + y_1\partial_z, \partial_{x_2} + y_2\partial_z, \partial_{x_3}\rangle$$
	Clearly, $\mathcal{D}^3=TM$. Also, $\mathcal{L} = \langle\partial_w,\partial_{y_3},\partial_{x_3}\rangle$. Since $\partial_{y_3}\not\in\mathcal{D}$, we have $\calL\not\subset\calD$.
	\item[(c)] Let $v_i = \partial_{x_i} + w\partial_{y_i} + y_i\partial_z$ for $i=1,2,3$ and $\mathcal{D} = \langle\partial_w,v_1,v_2,v_3\rangle$. Then, $[\mathcal{D},\mathcal{D}] = \langle\partial_{y_1},\partial_{y_2},\partial_{y_3}\rangle$ and so $\mathcal{E}=\mathcal{D}^2$ is a co-rank 1 distribution and $\mathcal{D}^3 = TM$. Also, $\mathcal{L} = \langle\partial_w\rangle$. In this case, we have the flag $\calL\subset\calD\subset\calE$, where $\calE$ is an even contact structure. Further, note that there exists a co-rank $1$ integrable distribution $\langle v_1,v_2,v_3\rangle$ contained in $\calD$.
\end{enumerate}
}
\end{example}

The above examples justify the conditions (3) and (4) in the definition of the generalized Engel structure.

\subsection{Pfaffian system}
A \emph{Pfaffian system} is a sub-distribution of the cotangent bundle $T^*M$. Given a distribution $\calD\subset TM$, we have an associated Pfaffian system $\calS(\calD)$ defined as the collection of $1$-forms which vanish on $\calD$. In this section we would like to find out the Pfaffian system for a generalized Engel distribution.\\

We start with a co-rank $k+1$ generalized Engel distribution $\calD$, where $k=2l+1$ is odd. Suppose locally,
\[\calE = \{\theta=0\} \ \ \text{ and }\ \ \ \calD=\{\omega^1=\ldots=\omega^k=0=\theta\}\]
for $1$-forms $\theta,\omega^1,\ldots,\omega^k$. Set, $\eta^i = \omega^1\wedge\ldots\omega^k\wedge\theta\wedge d\omega^i$.
\begin{prop}
\label{propDistToForms}
We have the following.
\begin{enumerate}
	\item \label{itemForms:1} $\{\eta^1,\ldots,\eta^k\}$ is point-wise linearly independent
	\item \label{itemForms:2} $\omega^i\wedge\theta\wedge d\theta^{l+1} = 0, \forall i=1,\ldots,k$
	\item \label{itemForms:3} $\theta\wedge d\theta^{l+1}\ne 0$
	\item \label{itemForms:4} $\theta\wedge d\theta^{l+2} = 0$
\end{enumerate}
\end{prop}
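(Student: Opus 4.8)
My plan is to work locally, translating each of the four statements into a condition on the forms $\theta, \omega^1, \ldots, \omega^k$, and using the geometric hypotheses on the flag $\calL \subset \calD \subset \calE \subset TM$ — specifically $\calD^2 = \calE$, $\calD^3 = TM$, and $\calL$ being the Cauchy characteristic of $\calE$ with corank $1$ in $\calD$. The starting observation is the standard dictionary between bracket-growth of a distribution and the structure of $d$ modulo the annihilating ideal. First I would fix, at a point $p$, a coframe adapted to the flag: pick $1$-forms so that $\calE = \ker\theta$, $\calD = \ker\theta \cap \bigcap_i \ker\omega^i$, and — using that $\calL$ has corank $1$ in $\calD$ — arrange that $d\theta$ restricted to $\calD$ has exactly a $1$-dimensional kernel there (this kernel is $\calL$, since $\calL = \ker d\theta|_\calE \subset \calD$ and $\calL$ has corank $1$ in $\calD$, hence also corank $1$ in its kernel-relation inside $\calE$; care is needed because a priori $\ker d\theta|_\calD$ could be larger than $\ker d\theta|_\calE \cap \calD$, but condition (4) of Definition~\ref{defnGenEngel} forces these to agree).

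For item~\eqref{itemForms:1}: $\omega^1\wedge\cdots\wedge\omega^k\wedge\theta\wedge d\omega^i$ is, up to the fixed nonvanishing $(k{+}1)$-form $\omega^1\wedge\cdots\wedge\omega^k\wedge\theta$, the restriction of $d\omega^i$ to $\calD$. A linear dependence $\sum c_i\, \eta^i = 0$ means $\sum c_i\, d\omega^i|_\calD = 0$, i.e. the $1$-form $\sum c_i \omega^i$ is closed modulo $\calD$'s ideal when restricted to brackets of sections of $\calD$; equivalently $\sum c_i\omega^i$ annihilates $[\calD,\calD] = \calD^2 = \calE$. But $\calE = \ker\theta$, so $\sum c_i\omega^i$ would be a multiple of $\theta$ on $TM$, contradicting linear independence of $\{\theta,\omega^1,\ldots,\omega^k\}$ unless all $c_i = 0$. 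So \eqref{itemForms:1} is exactly a restatement of $\calD^2 = \calE$.

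For items \eqref{itemForms:2} and \eqref{itemForms:3}: the $2$-form $d\theta$ restricted to $\calE$ has kernel $\calL$, which has corank $1$ in $\calD$ and hence corank $k{+}1$ in $\calE$; so $d\theta|_\calE$ is a symplectic-type form of rank $\operatorname{rk}\calE - \operatorname{rk}\calL = (k{+}1)$... wait — I need rank exactly $k+1 = 2l+2$, i.e. $d\theta|_\calE$ has half-rank $l+1$, giving $(d\theta)^{l+1}|_\calE \neq 0$ but $(d\theta)^{l+2}|_\calE = 0$. Concretely: $\dim\calE - \dim\calL$ must equal $k+1$; since $\calL$ has corank $1$ in $\calD$ and $\calD$ has corank $1$ in $\calE$, indeed $\dim\calE - \dim\calL = 2 = $ no — let me recount: corank of $\calD$ in $TM$ is $k+1$, corank of $\calE$ in $TM$ is $1$, so $\calD$ has corank $k$ in $\calE$, and $\calL$ has corank $1$ in $\calD$, so $\calL$ has corank $k+1$ in $\calE$, hence $d\theta|_\calE$ has rank $k+1 = 2l+2$ as its kernel is exactly $\calL$. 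Then $\theta\wedge(d\theta)^{l+1}$ is (up to a unit) the top exterior power of the symplectic form on $\calE/\calL$ wedged with $\theta$ and with a nonzero form detecting the $\calL$-directions transverse to... actually the clean statement is: $\theta\wedge(d\theta)^{l+1} \neq 0$ iff $d\theta|_{\ker\theta}$ has rank $\geq 2l+2$, which holds; and $\omega^i\wedge\theta\wedge(d\theta)^{l+1} = 0$ iff $d\theta$ restricted to $\ker\theta\cap\ker\omega^i$ has rank $< 2l+2$, i.e. drops — which holds because $\calL\subset\ker\omega^i$ (as $\calL\subset\calD$), so the kernel of $d\theta$ on that subspace still contains $\calL$, and the rank there is at most $\dim(\ker\theta\cap\ker\omega^i) - \dim\calL = 2l+1 < 2l+2$. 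Finally \eqref{itemForms:4}: $\theta\wedge(d\theta)^{l+2} = 0$ just says $d\theta|_{\ker\theta}$ has rank $< 2l+4$, which follows since its rank is exactly $2l+2$.

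The main obstacle I anticipate is justifying the rank computation for $d\theta$ cleanly — in particular confirming that the Cauchy characteristic $\calL$ is *exactly* the kernel of $d\theta|_{\ker\theta}$ (Definition~\ref{defnCauchyDist} gives this locally) and that conditions (3)–(4) of Definition~\ref{defnGenEngel} pin down its dimension, so that all the "$\neq 0$" claims are not merely "$\leq$". A secondary subtlety is that $d\theta$, $d\omega^i$ are only defined up to adding combinations of $\theta,\omega^j$ and their wedges, so every displayed identity must be checked to be well-defined modulo the ideal $\langle\theta,\omega^1,\ldots,\omega^k\rangle$; fortunately items \eqref{itemForms:2}–\eqref{itemForms:4} are each wedged with enough of the $\omega$'s and $\theta$ to kill the ambiguity, and \eqref{itemForms:1} is interpreted as a statement in $\Lambda^{k+3}T^*M$ where the ambiguity again drops out. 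I do not expect $\calD^3 = TM$ to be needed for this proposition beyond what is already encoded in $\calD^2 = \calE$ and $\calE$ having corank $1$; it will resurface when one uses these forms to build the local normal form, but not here.
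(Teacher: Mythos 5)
Your proposal is correct, and for items (2)--(4) it follows essentially the same route as the paper: identify $\calL$ as the kernel of $d\theta|_{\ker\theta}$, compute that $\calL$ has co-rank $k+1=2l+2$ in $\calE$ (from $\cork_{\calE}\calD=k$ and $\cork_{\calD}\calL=1$), conclude that $d\theta|_\calE$ has rank exactly $2l+2$, and then read off (3) and (4); for (2) your observation that $\calL\subset\ker\theta\cap\ker\omega^i$ forces the rank of $d\theta$ there below $2l+2$ is the same rank-counting the paper performs on the $(2l+3)$-form $\omega^i\wedge d\theta^{l+1}|_\calE$. Where you genuinely diverge is item (1). The paper first proves each $\eta^i\ne 0$ by exhibiting explicit test vectors: it uses the surjectivity of the bundle map $\calL\to\calE/\calD$, $L\mapsto[D,L]\bmod\calD$ (which needs the integrability of $\calL$ and $\cork_{\calD}\calL=1$, so that all of $[\calD,\calD]\bmod\calD$ is generated by brackets of $D$ with $\calL$), and then reduces a putative linear dependence $\eta^1=\sum_{i>1}f_i\eta^i$ to the nonvanishing case by replacing $\omega^1$ with $\tilde\omega^1=\omega^1-\sum f_i\omega^i$. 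Your argument is more economical: a dependence $\sum c_i\eta^i=0$ at $p$ says $\bigl(\sum c_i\,d\omega^i\bigr)|_{\calD_p}=0$, hence by the Cartan formula the $1$-form $\mu=\sum c_i\omega^i$ annihilates $\calD_p+[\calD,\calD]_p=\calE_p$, so $\mu_p\in\langle\theta_p\rangle$, contradicting the pointwise independence of $\{\theta,\omega^1,\ldots,\omega^k\}$. This uses only $\calD^2=\calE$ and nothing about $\calL$, which is a genuine simplification for this item; the price is that you do not produce the explicit dual vectors $V^i=[D,L^i]\bmod\calD$, which the paper's construction makes available and which echo in the proof of the converse proposition. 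Your closing remarks on well-definedness modulo the annihilating ideal and on the non-necessity of $\calD^3=TM$ are both accurate.
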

\begin{proof}
Choose local vector fields $D$ and $R$ such that $\calD/\calL = \langle D \mod \calL \rangle$ and $TM/\calE =\langle R \mod \calE\rangle$. Since $\calL$ is integrable and $\calE=\calD^2 =\calD + [\calD,\calD]$, we have that the map
\begin{align*}
\calL &\to \calE/\calD\\
L &\mapsto [D,L] \mod \calD
\end{align*}
is a surjective bundle map. Since $\{\omega^i\}$ is linearly independent and $\calD$ is their common kernel in $\calE$, we can choose dual vectors $V^i\in\calE/\calD$. Also from the surjectivity, there exists $L^i\in \calL$ such that $V^i=[D,L^i] \mod \calD$. Then $\eta^i\ne 0\forall i$, since we have, $$\eta^i(V^1,\ldots,V^k,R,D,L^i) \ne 0$$
If possible, let $\{\eta^i\}$ be linearly dependent at the point $p$. Then without loss of generality we may assume that, $\eta^1 = \sum_{i=2}^k f_i \eta^i$ at $p$ for some functions $f_i$. Set, $\tilde{\omega}^1 = \omega^1 - \sum_{i>1} f_i\omega^i$. Then clearly, $\calD$ is also defined as $\{\tilde{\omega}^1=\omega^2=\ldots=\omega^k=0=\theta\}$. But then we must have that
$$\tilde{\omega}^1\wedge\omega^2\wedge\ldots\omega^k\wedge\theta\wedge d\tilde{\omega}^1 \ne 0$$
On the other hand,
\begin{align*}
\tilde{\omega}^1\wedge\omega^2\wedge\ldots\omega^k\wedge\theta\wedge d\tilde{\omega}^1 &= \Big(\omega^1-\sum_{i>1}f_i\omega^i\Big)\wedge\omega^2\wedge\ldots\wedge\omega^k\wedge\theta\wedge \Big(d\omega^1 - \sum_{i>1}d(f_i\omega^i)\Big)\\
&=\omega^1\wedge\ldots\omega^k\wedge\theta\wedge\Big(d\omega^1-\sum_{i>1}df_i\wedge\omega^i -\sum_{i>1}f_id\omega^i\Big)\\
&=\omega^1\wedge\ldots\omega^k\wedge\theta\wedge\Big(d\omega^1-\sum_{i>1}f_id\omega^i\Big)\\
&=\eta^1-\sum_{i>1}f_i\eta^i\\
&=0, \textnormal{ at the point } p
\end{align*}
This is a contradiction. Hence we have that the set of $(k+3)$-forms $\{\eta^i\}$ are point-wise linearly independent. This proves (\ref{itemForms:1}).

Now observe that $\calL=\ker (d\theta|_{\ker\theta})$. So, on $\calE/\calL$ we have that $d\theta$ has full rank. Since $\calL$ is of co-rank $k+2=2l+3$, $$\theta\wedge d\theta^{l+1}\ne 0, \theta\wedge d\theta^{l+2} = 0$$
Thus proving (\ref{itemForms:3}) and (\ref{itemForms:4}).

Next, consider the $2l+3$-form $\omega^i\wedge d\theta^{l+1}|_\calE$ on $\calE$. For any $L\in\calL$ we have that, $\iota_L\omega^i\wedge d\theta^{l+1}|_\calE$ identically zero, since $\omega^i(L) = 0$ and $\iota_Ld\theta|_\calE=0$. Thus, $\calL$ is in the kernel of $\omega^i\wedge d\theta^{l+1}|_\calE$. But $\calL$ has co-rank $2l+2$ in $\calE$ and then by simple rank counting argument, $\omega^i\wedge d\theta^{l+1}|_\calE$ is identically zero. But $\calE = \ker\theta$ and hence $\omega^i\wedge\theta\wedge d\theta^{l+1}= 0$, proving (\ref{itemForms:2}).

\end{proof}

The converse of \ref{propDistToForms} is also true. Suppose we are given some co-rank $k+1$ distribution $\calD$ on a manifold $M$, where $k=2l+1$, such that $\calD$ is locally the common kernel of $1$-forms $\{\theta,\omega^1,\ldots,\omega^k\}$, satisfying
\begin{itemize}
	\item $\{\eta^1,\ldots,\eta^k\}$ is point-wise linearly independent, where $\eta^i = \omega^1 \wedge \ldots \omega^k \wedge \theta \wedge d\omega^i$
	\item $\omega^i\wedge\theta\wedge d\theta^{l+1} = 0$ for all $i=1,\ldots,k$
	\item $\theta\wedge d\theta^{l+1}\ne 0$
	\item $\theta\wedge d\theta^{l+2} = 0$
\end{itemize}
\begin{prop}
Under the above hypotheses, $\calD$ is a generalized Engel structure.
\end{prop}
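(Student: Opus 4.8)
The plan is to verify the four clauses of Definition~\ref{defnGenEngel} for $\calD$, essentially running the argument of Proposition~\ref{propDistToForms} in reverse. Set $\calE:=\ker\theta$; then $\calD\subset\calE$, so $\cork\calE=1$, while $\cork\calD=k+1=2l+2$ is even. The hypotheses $\theta\wedge d\theta^{l+1}\ne0$ and $\theta\wedge d\theta^{l+2}=0$ say precisely that $d\theta|_\calE$ has constant rank $2l+2$, so that $\calL:=\ker(d\theta|_\calE)$ is a genuine distribution — the Cauchy characteristic distribution of $\calE$ in the sense of Definition~\ref{defnCauchyDist} — of corank $2l+2$ in $\calE$, i.e. $\cork\calL=k+2$. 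First I would show $\calL\subset\calD$: locally put $d\theta$ into the normal form $d\theta=\sum_{j=1}^{l+1}\alpha_j\wedge\beta_j+\theta\wedge\gamma$ with $\{\theta,\alpha_1,\beta_1,\dots,\alpha_{l+1},\beta_{l+1}\}$ pointwise independent, which is possible since $d\theta|_\calE$ has rank $2l+2$. Then $\theta\wedge d\theta^{l+1}=(l+1)!\,\theta\wedge\alpha_1\wedge\beta_1\wedge\cdots\wedge\alpha_{l+1}\wedge\beta_{l+1}$ is, up to a nonzero scalar, a decomposable $(k+2)$-form whose annihilator is exactly $\mathrm{Ann}(\calL)=\mathrm{span}(\theta,\alpha_1,\beta_1,\dots,\alpha_{l+1},\beta_{l+1})$. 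The hypothesis $\omega^i\wedge\theta\wedge d\theta^{l+1}=0$ therefore forces $\omega^i\in\mathrm{Ann}(\calL)$ for every $i$, and together with $\calL\subset\ker\theta$ this yields $\calL\subset\calD$; comparing coranks, $\calL$ has corank $1$ in $\calD$. Fix a local vector field $D$ with $\calD=\calL\oplus\langle D\rangle$.

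Next I would prove $\calD^2=\calE$. Since $\calL=\ker(d\theta|_\calE)\subset\calD$ and $\calD/\calL$ is a line, the nondegenerate $2$-form induced by $d\theta$ on $\calE/\calL$ vanishes on $\calD/\calL$; hence $d\theta|_\calD=0$, so $\theta([X,Y])=-d\theta(X,Y)=0$ for $X,Y\in\calD$, i.e. $[\calD,\calD]\subset\calE$ and $\calD^2\subseteq\calE$. For the reverse inclusion, using that $\calL$ is integrable (Frobenius) and $\calD=\calL\oplus\langle D\rangle$, the image of $[\calD,\calD]$ in $\calE/\calD$ is spanned by $\{[D,L]\bmod\calD:L\in\calL\}$, so $\calD^2=\calE$ as soon as the map $\Phi\colon\calL\to\calE/\calD$, $L\mapsto[D,L]\bmod\calD$, is surjective. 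Against the coframe $\{\omega^i\}$ dual to $\calE/\calD$ its components are $L\mapsto\omega^i([D,L])=-d\omega^i(D,L)=-(\iota_Dd\omega^i)(L)$, so $\Phi$ is onto iff $\{\iota_Dd\omega^i|_\calL\}_{i=1}^{k}$ is pointwise linearly independent. This is exactly what hypothesis~(1) supplies: writing $\Omega:=\omega^1\wedge\cdots\wedge\omega^k\wedge\theta$ (a nowhere-zero $(k+1)$-form with $\iota_X\Omega=0\iff X\in\calD$), one has $\eta^i=\Omega\wedge d\omega^i$, and for any $2$-form $\mu$ one checks that $\Omega\wedge\mu=0$ at a point iff $\mu|_\calD=0$ there; moreover $d\omega^i|_\calD$ is pinned down by the numbers $d\omega^i(D,L)$, $L\in\calL$, since $d\omega^i(L,L')=-\omega^i([L,L'])=0$ by integrability of $\calL$. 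Chaining these, $\{\eta^i\}$ pointwise independent $\Leftrightarrow$ $\{\iota_Dd\omega^i|_\calL\}$ pointwise independent $\Leftrightarrow$ $\Phi$ surjective, whence $\calD^2=\calE$ is a corank $1$ distribution.

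Finally I would observe that $\calD^3=TM$. Since $\calL$ has corank $1$ in $\calD$ we have $\calD\not\subset\calL=\ker(d\theta|_\calE)$, so at each point there are $X\in\calD$, $Y\in\calE$ with $d\theta(X,Y)\ne0$; then $\theta([X,Y])=-d\theta(X,Y)\ne0$, so $[\calD,\calE]\not\subset\calE$, and as $\cork\calE=1$ this forces $\calD^3=\calD+[\calD,\calD^2]=\calD+[\calD,\calE]=TM$. All four conditions of Definition~\ref{defnGenEngel} are then verified, $\cork\calD$ is even, and $\calL$ is by construction the Cauchy characteristic distribution of $\calE=\calD^2$, hence the characteristic distribution of $\calD$.

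The step I expect to be the crux is the equivalence in the second paragraph between hypothesis~(1) and surjectivity of $\Phi$: it rests on the bookkeeping fact that $\Omega\wedge\mu=0\iff\mu|_\calD=0$ together with the reduction of $d\omega^i|_\calD$ to $\iota_Dd\omega^i|_\calL$ via integrability of $\calL$. Everything else is rank counting together with the pointwise normal form for the closed $2$-form $d\theta$ modulo $\theta$.
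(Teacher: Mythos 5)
Your proof is correct and follows essentially the same route as the paper's: deduce $\calL\subset\calD$ from hypotheses (2)--(4) on $\theta$, establish $\calD^2=\calE$ by showing the bracket map $\calL\to\calE/\calD$, $L\mapsto[D,L]\bmod\calD$, is surjective using the pointwise independence of the $\eta^i$, and get $\calD^3=TM$ from the nondegeneracy of $d\theta$ on $\calE/\calL$. The local implementations differ only cosmetically (you use the decomposability of $\theta\wedge d\theta^{l+1}$ and the identity $\Omega\wedge\mu=0\iff\mu|_\calD=0$ where the paper evaluates on dual frames and dualizes to the forms $\tau^i=-\iota_Z d\omega^i|_\calL$), and both are sound.
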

\begin{proof}
Set $\calE = \ker\theta$ and $\calL = \ker d\theta|_\calE$. These are locally defined distributions of co-rank $1$ and $2l+3$ respectively. We can get a local framing of $TM/\calL$ as $\big\{R, X_i,Y_j \big| i,j = 1,\ldots,l+1\big\}$ such that
$\theta\wedge d\theta^{l+1}(R,X_1,Y_1,\ldots,X_{l+1},Y_{l+1}) \ne 0$. Consider $L\in \calL$. Then we have, $$0=\omega^i\wedge\theta\wedge d\theta^{l+1}(L, R, X_1,\ldots,Y_{l+1})= \omega^i(L)\theta\wedge d\theta^{l+1}(R, X_1,\ldots,Y_{l+1})$$
since all other terms vanish. But then $\omega^i(L)=0$, for all $i$. Thus, $L\in\ker D$. So we have the flag, $$\calL\subset\calD\subset\calE$$

Next we show $\calE=\calD^2$. First note that $\calD \underset{loc.}{=} \calL \oplus \langle Z \rangle$, for some choice of vector field $Z$. Since $\calL$ is Cauchy characteristic distribution of $\calE$, we have $[\calL,\calE] \subset\calE$. In particular, $[\calL, Z] \subset \calE$. Also $\calL$ being integrable, we have $[\calL,\calL] \subset \calL$ by Frobenius. Then clearly, $[\calD,\calD] \subset \calE$. Thus $\calD^2\subset\calE$. For the equality, consider the map,
\begin{align*}
\Phi : \calL &\to \calE/\calD\\
L &\mapsto [Z,L] \mod \calD
\end{align*}
$\Phi$ is a bundle map : $[Z,fL] = Z(f)L + f[Z,L] \equiv f[Z,L] \mod \calD$, as $\calL\subset\calD$. We will show that $\Phi$ is of full rank, which will imply that $\calE=\calD^2$. Equivalently this happens if $\Phi^*$ is injective. Dualizing $\Phi$ we get,
\begin{align*}
\Phi^* : \big(\calE/\calD\big)^* &\rightarrow \calL^*\\
[\alpha] &\mapsto -\iota_Zd\alpha|_\calL
\end{align*}
where $(\calE/\calD)^*$ consists of classes of $1$-forms $\alpha$ defined on $\calE$, which annihilates $\calD$. Consider the $1$-forms, $$\tau^i \coloneqq -\iota_Z d\omega^i|_\calL$$ defined on $\calL$. Since $\omega^i$ induces a basis for $(\calE/\calD)^*$, it is enough to show that the maps $\tau^i$ are point-wise linearly independent for $\Phi^*$ to be injective. If not, then without loss of generality assume, $\tau^1 = \sum_{i>1} f_i\tau^i$ at some point $p$, for some functions $f_i$. Get dual vectors $\{R,V_1,\ldots,V_k\}$ in $TM/\calD$ of $\{\theta,\omega^1,\ldots,\omega^k\}$ respectively. Now, for any $L\in\calL$, we have $\eta^i(V_1,\ldots,V_k,R,Z,L)=d\omega^i(Z,L)=-\tau^i(L)$. Thus, $$\eta^1(V_1,\ldots,V_k,R,Z,L)=-\sum_{i>1}f_i\eta^i(V_1,\ldots,V_k,R,Z,L)$$ at the point $p$. But then, $\eta^1=-\sum_{i>1}f_i\eta^i$ at $p$, contradicting point-wise linear independence of $\{\eta^i\}$. Hence, $\{\tau^i\}$ must be linearly independent point-wise. Thus we get $\calE=\calD^2$. $\calE$ and consequently $\calL$ are now globally defined distributions.

Lastly to verify $\calD^3=TM$ note that, $d\theta$ is non-degenerate on $\calE/\calL$. In particular, for $Z\in\calD$ satisfying $\calD = \calL\oplus\langle Z \rangle$, we have $\iota_Zd\theta \ne 0$. So, $d\theta(Z,V) \ne 0$, for some $V\in\calE/\calD$. Then $V \in [\calD,\calD]$ and $0\ne d\theta(Z,V)=-\theta[Z,V]$. Thus, $TM = \calE \oplus \langle [Z,V] \rangle$. So, $TM = \calD^3$.
\end{proof}

\section{Stability of generalized Engel structure}
Engel structures are not globally stable due to the presence of an integrable subbundle, though they have local stability property. Golubev proved the following Gray-type theorem for Engel structure which shows that a homotopy $\mathcal D_t$, $0\leq t\leq 1$, of Engel structures is obtained by an isotopy provided the characteristics distribution of $\mathcal D_t$ is independent of $t$.
\begin{theorem}[\cite{golubev}]\label{thmGolubev}
Let $\calD_t$, $0\leq t\leq 1$, be a one-parameter family of oriented Engel structures on an oriented compact $4$-dimensional manifold $M$, such that the characteristic line field $\calL(\calD_t)=\calL$ for all $t$. Then there exists an isotopy $\phi_t$, $0\leq t\leq 1$, of $M$ such that 
\[\phi_{t*}(\calD_t)=\calD_0,\qquad \phi_{t*}(\calL)=\calL.\]
\end{theorem}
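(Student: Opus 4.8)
The plan is to run a Moser-type deformation argument, in the spirit of Gray's theorem but adapted to the co-rank-$2$ setting, carried out in two stages: first bring the even contact structures $\calE_t:=\calD_t^2$ to a single fixed one, and then bring the line fields $\calD_t$ lying inside it to $\calD_0$. Throughout, the role of the hypothesis that the characteristic line field $\calL$ is independent of $t$ is to make the two successive Moser equations solvable. By the orientation hypotheses, choose smooth one-parameter families of globally defined $1$-forms with $\calE_t=\ker\theta_t$ and $\calD_t=\ker\theta_t\cap\ker\omega_t$; thus $\theta_t\wedge d\theta_t\ne 0$ and (the Engel condition) $\omega_t\wedge\theta_t\wedge d\omega_t\ne 0$, and $\calL=\ker\bigl(d\theta_t|_{\calE_t}\bigr)$ for every $t$. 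In each stage I will solve for a time-dependent vector field on the compact manifold $M$, integrate it to an isotopy, and deduce the behaviour of the flag from how the pulled-back forms evolve.

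\emph{Stage 1: straightening $\calE_t$.} Because $\calL\subset\calE_t$ for all $t$ and $\calL$ does not depend on $t$, a fixed local section $L$ of $\calL$ has $\theta_t(L)\equiv 0$, hence $\dot\theta_t(L)=0$; so the restriction $\dot\theta_t|_{\calE_t}$ annihilates $\calL$. Since $\calL$ is precisely the kernel of the co-rank-$1$ skew form $d\theta_t|_{\calE_t}$, the equation $\iota_{Y_t}d\theta_t|_{\calE_t}=-\dot\theta_t|_{\calE_t}$ has a solution $Y_t\in\calE_t$, unique modulo $\calL$. Choosing $Y_t$ smoothly, and the function $\mu_t$ fixed by $\dot\theta_t+\iota_{Y_t}d\theta_t=\mu_t\theta_t$ (evaluate on a vector field transverse to $\calE_t$), the flow of $Y_t$ is an isotopy $\psi_t$ along which $\psi_t^*\theta_t$ stays a positive multiple of $\theta_0$; hence $\psi_t$ matches $\calE_t$ with $\calE_0$. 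By naturality of the Cauchy characteristic distribution, $\psi_t$ then matches $\calL$ (the Cauchy characteristic of $\calE_t$) with the Cauchy characteristic of $\calE_0$, which is again $\calL$. Pulling the family $\calD_t$ back along $\psi_t$, we may henceforth assume $\calE:=\calD_t^2$ and $\calL$ are both independent of $t$, with $\calD_0$ unchanged; write $\calE=\ker\theta$.

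\emph{Stage 2: straightening $\calD_t$ inside the fixed $\calE$.} This time I seek the generating field $X_t$ \emph{inside} $\calL$. For any $X_t\in\calL$ one has $\iota_{X_t}\theta=0$ and $\iota_{X_t}d\theta|_\calE=0$, so $\iota_{X_t}d\theta=g_t\theta$ for a function $g_t$, and the flow of $X_t$ preserves both $\calE$ and the (integrable) foliation $\calL$. It remains to arrange $\bigl(\dot\omega_t+\iota_{X_t}d\omega_t\bigr)\big|_{\calD_t}=0$. Two facts make this a solvable pointwise-linear equation for $X_t\in\calL$: first, $\dot\omega_t$ annihilates $\calL$ (same argument as in Stage 1) and $d\omega_t|_\calL=0$ (as $\calL$ is integrable and contained in $\ker\omega_t$), so both summands descend to the $1$-dimensional space $(\calD_t/\calL)^{*}$; second, writing $\calD_t=\calL\oplus\langle Z_t\rangle$, the resulting linear map $\calL\to(\calD_t/\calL)^{*}$ sends $X$ to $d\omega_t(X,Z_t)=-\,\iota_{Z_t}d\omega_t(X)$, where $\iota_{Z_t}d\omega_t|_\calL$ is nowhere zero by the Engel non-degeneracy $\omega_t\wedge\theta\wedge d\omega_t\ne 0$ (part~(1) of Proposition~\ref{propDistToForms}). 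Hence $X_t\in\calL$ is uniquely determined; being characterized invariantly, it is globally defined and smooth in $t$. Integrating it to an isotopy $\phi_t$, the pair $(\phi_t^*\theta,\phi_t^*\omega_t)$ obeys a linear system (driven by $g_t$ and by the coefficients expressing $\dot\omega_t+\iota_{X_t}d\omega_t$ in $\langle\theta,\omega_t\rangle$) that confines it to the fixed plane field $\langle\theta,\omega_0\rangle=\mathrm{Ann}\,\calD_0$; thus $\phi_t^*\langle\theta,\omega_t\rangle=\langle\theta,\omega_0\rangle$ for all $t$, i.e.\ $\phi_t$ matches $\calD_t$ with $\calD_0$, and --- as $X_t$ is tangent to $\calL$ --- it also matches $\calL$ with $\calL$. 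Composing the two stages (and, to match verbatim the direction in the statement, passing to the inverse isotopy) yields the desired $\phi_t$.

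\emph{Main obstacle.} I expect the crux to be Stage 2, which is also the precise reason Engel structures fail to be stable under arbitrary isotopies: in order to preserve $\calL$ one is forced to generate the isotopy by a vector field tangent to $\calL$, and the sole obstruction to solving the Moser equation within $\calL$ is the ``transverse derivative'' $\dot\omega_t|_\calL$, which vanishes exactly because the hypothesis pins $\calL$ down; the non-degeneracy recorded in Proposition~\ref{propDistToForms} then promotes solvability to a canonical --- hence globally defined --- solution. Everything else is routine bookkeeping: the conformal factors, the orientations, and the fact (immediate from the naturality of the Cauchy characteristic) that the Stage-1 isotopy already preserves $\calL$. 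The very same two-stage scheme, now using Proposition~\ref{propDistToForms} in its full generality, proves Theorem~\ref{thmGrayGeneral}.
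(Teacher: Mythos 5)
Your proposal is correct and follows essentially the same route as the paper: the paper states this result as a quotation from Golubev, but its proof of the generalization, Theorem~\ref{thmGrayGeneral}, is exactly your two-stage Moser scheme --- first Lemma~\ref{lemmaIsotopyOfE} straightens $\calE_t$ by a vector field in a complement of $\calL$ inside $\calE_t$ (solvable because $\dot\theta_t$ vanishes on the $t$-independent $\calL=\ker d\theta_t|_{\calE_t}$), then Theorem~\ref{thmIsotopyOfD} straightens $\calD_t$ by a field tangent to $\calL$ (solvable by the non-degeneracy $\omega\wedge\theta\wedge d\omega\neq 0$ of Proposition~\ref{propDistToForms}), followed by the same linear ODE bookkeeping for the coefficient matrices. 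Your solvability checks and the identification of the key obstruction coincide with the paper's.
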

Theorem ~\ref{thmGrayGeneral} is direct generalization of Theorem~\ref{thmGolubev} for generalized Engel structure. We shall first prove a special case of of this theorem.
\begin{theorem}
\label{thmIsotopyOfD}
Suppose $\mathcal D_t$, $0\le t \le 1$, is a one-parameter family of generalized Engel structure on a closed manifold $M$ such that $\calD_t^2$ is independent of $t$ and equals $\calE$. If $\mathcal L$ is the Cauchy characteristics distribution of $\mathcal E$ then there exists an isotopy $\phi_t$  of $M$ such that 
\[\phi_{t*}\calD_0 = \calD_t, \qquad \phi_{t*}\calE = \calE,  \qquad \phi_{t*}\calL = \calL.\]
\end{theorem}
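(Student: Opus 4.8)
The plan is a Moser-type deformation argument, in the spirit of Gray stability for contact structures. The first observation is that it suffices to produce an isotopy $\phi_t$ with $\phi_{t*}\calD_0=\calD_t$: since $\calE=\calD_t^2$ and $\calL$ is the Cauchy characteristic distribution of $\calE$, both $\calE$ and $\calL$ are natural with respect to diffeomorphisms, so $\phi_{t*}\calE=\calE$ and $\phi_{t*}\calL=\calL$ will come for free. I will obtain $\phi_t$ as the flow of a time-dependent vector field $X_t$, and the crucial device is to insist that $X_t\in\Gamma(\calL)$ for every $t$.

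To see why this class linearizes the problem, fix a local coframing: write $\calE=\ker\theta$ with $\theta$ independent of $t$, and $\calD_t=\ker\theta\cap\bigcap_{i=1}^{k}\ker\omega_t^i$ with the $\omega_t^i$ depending smoothly on $t$, where $\calD_t$ has co-rank $k+1$. Because $X_t\in\calL\subset\calD_t$ lies in the kernel of $\theta$ and of every $\omega_t^i$, and because $\calL=\ker(d\theta|_\calE)$, Cartan's formula gives $\calL_{X_t}\theta=\iota_{X_t}d\theta\in\langle\theta\rangle$ and $\calL_{X_t}\omega_t^i=\iota_{X_t}d\omega_t^i$; in particular no derivative of $X_t$ occurs. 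The usual Moser reasoning then shows that the flow of $X_t$ satisfies $\phi_{t*}\calD_0=\calD_t$ provided $\iota_{X_t}d\omega_t^i+\dot\omega_t^i\in\calS(\calD_t)$ for every $i$, equivalently $(\iota_{X_t}d\omega_t^i+\dot\omega_t^i)\big|_{\calD_t}=0$ --- a \emph{pointwise}-linear system in $X_t$, which is exactly the point of the choice $X_t\in\Gamma(\calL)$.

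Next I would solve this system fibrewise. On $\calL$ both terms vanish automatically: $\dot\omega_t^i|_\calL=0$ because $\omega_s^i|_\calL\equiv0$ for every $s$ (as $\calL\subset\calD_s$), and $(\iota_{X_t}d\omega_t^i)|_\calL=0$ because $\calL$ is integrable and $\omega_t^i$ vanishes on it. Writing $\calD_t=\calL\oplus\langle Z_t\rangle$ locally, the only surviving conditions are $d\omega_t^i(X_t,Z_t)=-\dot\omega_t^i(Z_t)$ for $i=1,\dots,k$. The linear map $\calL_p\to\mathbb{R}^k$, $X\mapsto\big(d\omega_t^i(X,Z_t)\big)_i$, is surjective, since its components are, up to sign, the forms $\tau_t^i=\iota_{Z_t}d\omega_t^i|_\calL$, and these are pointwise linearly independent by the computation used in the proof of the converse of Proposition~\ref{propDistToForms} (a linear relation among the $\tau^i$ at a point forces one among the $\eta^i$ there). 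Hence solutions $X_t(p)\in\calL_p$ exist at every point, and the solution set is a nonempty affine sub-bundle of $\calL$ whose linear part has constant co-rank $k$; a smooth section of it, depending smoothly on $t$ --- the local data glue because the condition ``$\calL_{X_t}\beta+\dot\beta\in\calS(\calD_t)$ for all $\beta\in\calS(\calD_t)$'' is intrinsic --- is the $X_t$ I want.

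Finally, since $M$ is closed, $X_t$ integrates to an isotopy $\phi_t$ defined on all of $[0,1]$. With the local frame $\theta,\omega_t^1,\dots,\omega_t^k$ of $\calS(\calD_t)$, the construction gives $\tfrac{d}{dt}\big(\phi_t^*\beta_t^a\big)=\phi_t^*\big(\calL_{X_t}\beta_t^a+\dot\beta_t^a\big)$, which lies in the span of the $\phi_t^*\beta_t^b$; hence those pulled-back forms span a sub-bundle of $T^*M$ independent of $t$, namely $\calS(\calD_0)$, so $\phi_t^*\calS(\calD_t)=\calS(\calD_0)$, i.e. $\phi_{t*}\calD_0=\calD_t$. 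Then $\phi_{t*}\calE=\calE$ and $\phi_{t*}\calL=\calL$ follow by the naturality noted above (or directly, from $\calL_{X_t}\theta\in\langle\theta\rangle$ and from $X_t$ being tangent to the foliation $\calL$). The step I expect to be the real obstacle is the choice of the class $X_t\in\Gamma(\calL)$ itself: it is what must simultaneously kill every derivative term --- reducing everything to a pointwise linear problem --- force the invariance of $\calE$ and $\calL$, and still leave enough freedom to solve the residual $k\times k$ system; and the solvability of that system is precisely where the pointwise independence of the $\eta^i$ from Proposition~\ref{propDistToForms} must be re-used.
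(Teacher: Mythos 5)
Your proposal is correct, and its skeleton --- a Moser argument driven by a time-dependent field $X_t\in\Gamma(\calL)$ solving the pointwise affine system $(\iota_{X_t}d\omega^i_t+\dot\omega^i_t)|_{\calD_t}=0$, the remark that this condition is intrinsic to $\calD_t$ so that local solutions patch by convexity and a partition of unity (Proposition~\ref{propDiffEqLocalIndept}), and the linear ODE for the coefficient matrix showing $\phi_t^*\calS(\calD_t)=\calS(\calD_0)$ (Proposition~\ref{propDiffEquationLocal}) --- is exactly the paper's. Where you genuinely diverge is in producing the local solution $X_t$. The paper introduces the auxiliary distributions $\calK^i_t=\ker d\omega^i_t|_{\calD_t}$, $\calJ^i_t=\bigcap_{j\ne i}\calK^j_t$, $\calW_t=\bigcap_j\calK^j_t$, proves constant-rank statements for them (Lemma~\ref{propRankJ}), solves the $i$-th equation separately by inverting $d\omega^i_t$ on a rank-$2$ complement of $\calK^i_t$, checks that $X^i_t$ lands in $\calJ^i_t$ so it does not disturb the other $k-1$ equations, and sums. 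You instead note that on $\calD_t=\calL\oplus\langle Z_t\rangle$ the condition is vacuous along $\calL$ and reduces to the $k$ scalar equations $\tau^i_t(X_t)=-\dot\omega^i_t(Z_t)$ with $\tau^i_t=\iota_{Z_t}d\omega^i_t|_{\calL}$, solved all at once via the pointwise independence of the $\tau^i_t$; that independence is the dual formulation of the surjectivity of $L\mapsto[Z_t,L]\bmod\calD_t$, i.e.\ of $\calD_t^2=\calE$, and is established in the paper both through the $\eta^i$ computation (the converse of Proposition~\ref{propDistToForms}) and through the map $\Psi$ in Lemma~\ref{propRankJ}, so your appeal to it is legitimate. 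Your route is shorter, replacing three lemmas by one surjectivity statement plus a smooth section of a nonempty affine sub-bundle of $\calL$ of constant fiber dimension; the paper's equation-by-equation decomposition buys a completely explicit solution and makes the constant-rank bookkeeping concrete. Both treatments leave the invertibility of the transition matrix $(G^{ij}_t)$ for all $t$ at the same (standard, linear-ODE) level of detail, and your observation that $\phi_{t*}\calE=\calE$ and $\phi_{t*}\calL=\calL$ follow from naturality of $\calD\mapsto\calD^2$ and of the Cauchy characteristic is a clean alternative to the paper's direct use of $[\calL,\calE]\subset\calE$.
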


\subsection{Proof of Theorem~\ref{thmIsotopyOfD}}
The approach of the proof is very similar to that of Adachi in \cite{adachiCorank}. The proof will follow through a sequence of Lemmas. We assume that $\calD_t$ is a given smooth one-parameter family of co-rank $k+1$, where $k=2l+1$, generalized Engel distribution on a closed manifold $M$ such that $\calE=\calD_t^2$ is independent of $t$ and the Cauchy characteristic distribution of $\calE$ is $\calL$.\\

Suppose $\mathcal D_t$ is defined as the common kernel of the 1-forms $\omega^1_t,\omega^2_t,\ldots,\omega^k_t,\theta$ on some open subset $U$ of $M$. Let $X_t$ be a time-dependent vector field on $M$ which satisfies the following system of equations on $U$.

\begin{equation}\iota_{X_t}d\omega^i_t|_{\calD_t} + \frac{d}{dt}\omega^i_t|_{\calD_t} = 0, \forall i=1,\ldots,k; \qquad X_t\in\calL.\label{eqnODE}\end{equation}
As we shall see below, if any such $X_t$ exists, then its flow has the desired property. We begin with the following observation.
\begin{prop}
\label{propDiffEqLocalIndept} Suppose $\calE=\ker{\eta}$ and $\calD_t=\{\mu^1_t=\ldots=\mu^j_t=0=\eta\}$ for a smooth family of $1$-forms $\{\mu^i_t,\eta\}$ on $U$. Then the vector fields $X_t$, $0\leq t\leq 1$, in (\ref{eqnODE}) also satisfies the relations
\[\iota_{X_t}d\mu^i_t|_{\calD_t} + \frac{d}{dt}\mu^i_t|_{\calD_t} = 0.\]
In other words, $X_t$ depends on the distributions $\mathcal D_t$, not on the choice of local 1-forms defining the distributions.
\end{prop}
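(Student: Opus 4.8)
The plan is to reduce the claim entirely to the defining relations~(\ref{eqnODE}) by expressing the given coframe $\{\mu^i_t,\eta\}$ in terms of the old one $\{\omega^j_t,\theta\}$, and then checking that every extra term produced by this change of coframe is killed either by $X_t$ (because $X_t\in\calL$) or by restriction to $\calD_t$ (because $\calD_t\subset\calE$ is the common kernel of $\omega^1_t,\ldots,\omega^k_t$ and $\theta$). Concretely, each $\mu^i_t$ annihilates $\calD_t$, hence lies in the annihilator $\calS(\calD_t)$, which on $U$ is freely generated by $\omega^1_t,\ldots,\omega^k_t,\theta$; so there are functions $A^i_p(t,\cdot)$ and $c^i(t,\cdot)$ on $U$, smooth in $t$ as well (being the coefficients of a change between two smooth frames of the smoothly varying bundle $\calS(\calD_t)$), with
\[
\mu^i_t=\sum_{p=1}^{k}A^i_p(t)\,\omega^p_t+c^i(t)\,\theta .
\]
Here I use that $\theta$, and likewise $\eta$, may be taken independent of $t$ since $\calE=\ker\theta=\ker\eta$ does not vary.

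Next I would substitute this identity into the two terms of the asserted relation. Applying $d$ and contracting with $X_t$, the formula $\iota_{X}(df\wedge\alpha)=X(f)\,\alpha-\alpha(X)\,df$ together with $\omega^p_t(X_t)=\theta(X_t)=0$ (valid as $X_t\in\calL\subset\calD_t$) leaves $\iota_{X_t}(dA^i_p\wedge\omega^p_t)=X_t(A^i_p)\,\omega^p_t$ and $\iota_{X_t}(dc^i\wedge\theta)=X_t(c^i)\,\theta$, both of which vanish on restriction to $\calD_t$ since $\omega^p_t|_{\calD_t}=0$ and $\theta|_{\calD_t}=0$; and the last term gives $c^i\,\iota_{X_t}d\theta|_{\calD_t}=0$ because $X_t\in\calL=\ker\!\bigl(d\theta|_\calE\bigr)$ and $\calD_t\subset\calE$. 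Thus $\iota_{X_t}d\mu^i_t|_{\calD_t}=\sum_p A^i_p\,\iota_{X_t}d\omega^p_t|_{\calD_t}$. Differentiating the displayed identity in $t$ instead, and using $\dot\theta=0$ together with $\omega^p_t|_{\calD_t}=\theta|_{\calD_t}=0$, yields $\tfrac{d}{dt}\mu^i_t|_{\calD_t}=\sum_p A^i_p\,\tfrac{d}{dt}\omega^p_t|_{\calD_t}$. Adding the two computations,
\[
\iota_{X_t}d\mu^i_t\big|_{\calD_t}+\tfrac{d}{dt}\mu^i_t\big|_{\calD_t}=\sum_{p=1}^{k}A^i_p\Bigl(\iota_{X_t}d\omega^p_t\big|_{\calD_t}+\tfrac{d}{dt}\omega^p_t\big|_{\calD_t}\Bigr)=0
\]
by~(\ref{eqnODE}), which is the desired conclusion.

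I do not expect a genuine obstacle here; the statement is a verification that the system~(\ref{eqnODE}) is intrinsic to the family $\calD_t$, and the only real care needed is bookkeeping — charging each vanishing correctly either to $X_t\in\calL$ (the pairings $\omega^p_t(X_t)$, $\theta(X_t)$ and the contraction $\iota_{X_t}d\theta|_\calE$) or to $\calD_t\subset\calE$ being cut out by the $\omega^p_t$ and $\theta$ (the restrictions $\omega^p_t|_{\calD_t}$, $\theta|_{\calD_t}$). The point to record is exactly the coordinate-independence claimed in the proposition: the locally defined vector fields $X_t$ are solutions of one and the same coframe-free system attached to $\calD_t$.
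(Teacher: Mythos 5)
Your argument is correct and follows essentially the same route as the paper: express the new defining forms in terms of the old coframe, then observe that every extra term either vanishes on $\calD_t$ or is killed by $X_t\in\calL$. The one genuine improvement is that you allow the full change of coframe $\mu^i_t=\sum_p A^i_p\,\omega^p_t+c^i\theta$ and dispose of the resulting $c^i\,\iota_{X_t}d\theta|_{\calD_t}$ term via $X_t\in\calL=\ker(d\theta|_{\calE})$, whereas the paper silently drops the $\theta$-component by writing $\mu^i_t=\sum_j A^{ij}_t\omega^j_t$ with $A_t$ invertible; your version covers all admissible generating sets of the annihilator of $\calD_t$.
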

\begin{proof}
Suppose $\{\mu^i_t,\eta\}$ and $\{\omega^i_t, \theta\}$ be as above. Then we must have that $\eta=f\theta$ for some non-zero function $f$ and
\[
\begin{pmatrix}
\mu^1_t\\
\vdots\\
\mu^k_t
\end{pmatrix}= A_t
\begin{pmatrix}
\omega^1_t\\
\vdots\\
\omega^k_t
\end{pmatrix}
\]
for a family of non-singular $k\times k$ matrix $A_t=(A^{ij}_t)$. So, $\mu^i_t=\sum_j A^{ij}_t\omega^j_t$. Then $d\mu^i_t=\sum_j dA^{ij}_t \wedge \omega^j_t + A^{ij}_t d\omega^j_t$. So,
\begin{align*}
\iota_{X_t}d\mu^i_t|_{\calD_t} &= \sum_j \iota_{X_t}dA^{ij}_t\omega^j_t|_{\calD_t} - \iota_{X_t}\omega^j_t dA^{ij}_t|_{\calD_t} + A^{ij}_t\iota_{X_t}d\omega^j_t|_{\calD_t}\\
&= \sum_j A^{ij}_t\iota_{X_t}d\omega^j_t|_{\calD_t}, \textnormal{as } \omega^j_t(X_t)=0 \textnormal{ and } \omega^j_t|_{\calD_t}=0\\
&= -\sum_j A^{ij}_t \frac{d}{dt}\omega^j_t|_{\calD_t}
\end{align*}
On the other hand,
\begin{align*}
\frac{d}{dt}\mu^i_t|_{\calD_t} &= \sum_j \frac{dA^{ij}_t}{dt}\omega^j_t|_{\calD_t} + A^{ij}_t\frac{d}{dt}\omega^j_t|_{\calD_t}\\
&=\sum_j A^{ij}_t\frac{d}{dt}\omega^j_t|_{\calD_t}
\end{align*}
Hence we have, $\iota_{X_t}d\mu^i_t|_{\calD_t} + \frac{d}{dt}\mu^i_t|_{\calD_t} = 0$. Thus $X_t$ is a solution for every family of local forms defining $\calD_t$.
\end{proof}

\begin{prop}
\label{propDiffEquationLocal} Suppose there exists a time dependent vector field $X_t$, $0\leq t\leq 1$, on $M$ which satisfies the following conditions:
\[\iota_{X_t}d\omega^i_t|_{\calD_t} + \frac{d}{dt}\omega^i_t|_{\calD_t} = 0, \forall i=1,\ldots,k; \qquad X_t\in\calL\]
where $\theta$, $\omega^i_t, 0\leq t\leq 1, i=1,2,\dots,k$ is a smooth family of (local) 1-forms such that \[\calE\underset{loc.}{=}\ker\theta \ \ \  and  \ \ \ \calD_t\underset{loc.}{=}\{\omega^1_t = \ldots = \omega^k_t = 0 = \theta\}.\]
Then the flow $\phi_t$ obtained by integrating the time-dependent vector field $X_t$ satisfies, \[\phi_{t*}\calD_0=\calD_t,\ \ 
\phi_{t*}\calL=\calL\]
\end{prop}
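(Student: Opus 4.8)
The plan is to run a Moser-type deformation (``Gray stability'') argument: integrate $X_t$ and show that its flow transports the annihilating Pfaffian systems $\calS(\calD_t)$. Since $M$ is closed, $X_t$ is complete, so its flow $\phi_t$, $0\le t\le 1$, with $\phi_0=\mathrm{id}$, is a genuine isotopy of $M$. The assertion $\phi_{t*}\calL=\calL$ is immediate: as $X_t\in\calL$ and $\calL$ is integrable (Frobenius, cf.\ the remark following Definition~\ref{defnCauchyDist}), every integral curve of $X_t$ issuing from a leaf of the foliation integrating $\calL$ stays in that leaf, so $\phi_t$ restricts to a diffeomorphism of each leaf and hence $d\phi_t(\calL)=\calL$.

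For the statement on $\calD_t$, put $\gamma_t=\phi_t^*\theta$ and $\beta^i_t=\phi_t^*\omega^i_t$. Pulling back the annihilator of a distribution by a diffeomorphism yields the annihilator of the pulled-back distribution, so $\phi_{t*}\calD_0=\calD_t$ is equivalent to the equality of subbundles $\langle\beta^1_t,\dots,\beta^k_t,\gamma_t\rangle=\langle\omega^1_0,\dots,\omega^k_0,\theta\rangle$ in $T^*M$. Since $\gamma_0=\theta$ and $\beta^i_0=\omega^i_0$, it suffices to prove the left-hand span is independent of $t$. Differentiating with Cartan's formula (and using that $\theta$ does not depend on $t$), $\tfrac{d}{dt}\gamma_t=\phi_t^*\big(\iota_{X_t}d\theta+d\,\iota_{X_t}\theta\big)$ and $\tfrac{d}{dt}\beta^i_t=\phi_t^*\big(\iota_{X_t}d\omega^i_t+d\,\iota_{X_t}\omega^i_t+\tfrac{d}{dt}\omega^i_t\big)$.

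Now I would feed in the two hypotheses on $X_t$. Because $X_t\in\calL\subset\calE=\ker\theta$, we have $\iota_{X_t}\theta=0$; and because moreover $X_t\in\calL=\ker(d\theta|_\calE)$, the $1$-form $\iota_{X_t}d\theta$ annihilates $\calE=\ker\theta$, hence $\iota_{X_t}d\theta=g_t\theta$ for a smooth function $g_t$. Thus $\tfrac{d}{dt}\gamma_t=(g_t\circ\phi_t)\,\gamma_t$, which pointwise is a scalar linear ODE, giving $\gamma_t=h_t\theta$ with $h_t$ nowhere vanishing and $h_0=1$ (in particular $\phi_{t*}\calE=\calE$). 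Similarly $X_t\in\calL\subset\calD_t$ gives $\iota_{X_t}\omega^i_t=0$, so $\tfrac{d}{dt}\beta^i_t=\phi_t^*\big(\iota_{X_t}d\omega^i_t+\tfrac{d}{dt}\omega^i_t\big)$; by the defining equation (\ref{eqnODE}) the $1$-form $\iota_{X_t}d\omega^i_t+\tfrac{d}{dt}\omega^i_t$ vanishes on $\calD_t$, hence is of the form $\sum_j a^{ij}_t\omega^j_t+b^i_t\theta$ for smooth functions $a^{ij}_t,b^i_t$, and therefore $\tfrac{d}{dt}\beta^i_t=\sum_j(a^{ij}_t\circ\phi_t)\,\beta^j_t+(b^i_t\circ\phi_t)\,\gamma_t$. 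Collecting these, at each point $p\in M$ the tuple $(\beta^1_t,\dots,\beta^k_t,\gamma_t)$ satisfies a linear ODE in $(T^*_pM)^{k+1}$ whose time-$t$ flow is a linear isomorphism of that space; consequently the span of its entries is constant in $t$, and so equals $\langle\omega^1_0,\dots,\omega^k_0,\theta\rangle=\calS(\calD_0)$ for all $t$. That is precisely $\phi_{t*}\calD_0=\calD_t$.

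The computations above are routine once set up; the step that needs care is showing that the $t$-derivatives of the pulled-back forms land back inside the span $\langle\beta^1_t,\dots,\beta^k_t,\gamma_t\rangle$. The crucial input is the reduction $\iota_{X_t}d\theta\equiv0\pmod{\theta}$, which uses that $X_t$ lies in the \emph{Cauchy characteristic} distribution of $\calE$ and not merely in $\calE$; this is exactly where conditions (3)--(4) of Definition~\ref{defnGenEngel} enter (through $X_t\in\calL$), and without it the flow would fail to preserve $\calE$ and the argument would collapse. (The existence of a solution $X_t$ to (\ref{eqnODE}) lying in $\calL$ is the substantive analytic point, to be handled separately.)
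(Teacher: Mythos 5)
Your proof is correct and follows essentially the same Moser-type argument as the paper: Cartan's formula together with the two hypotheses on $X_t$ show that the pulled-back defining forms satisfy a linear ODE within their own span, so the annihilating Pfaffian system of $\calD_0$ is transported onto that of $\calD_t$. The only (cosmetic) differences are that you obtain $\phi_{t*}\calL=\calL$ directly from leaf-preservation rather than via $\phi_{t*}\calE=\calE$, and that you conclude from invertibility of the fundamental solution of the linear ODE rather than positing the ansatz $\phi_t^*\omega^i_t=\sum_j G^{ij}_t\omega^j_0+F^i_t\theta$ and solving for the coefficients --- which in fact settles the nonsingularity of $(G^{ij}_t)$ for all $t\in[0,1]$ slightly more cleanly than the paper's remark that it is nonsingular near $t=0$.
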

\begin{proof}
Since $X_t\in\calL$ and $[\calL,\calE]\subset\calE$, we have that $\phi_{t*}\calE=\calE$ and hence $\phi_{t*}\calL=\calL$, as $\calL$ is completely defined by $\calE$. So we have, $\phi_t^*\theta=F_t\theta$ for some family of non-vanishing functions $F_t$.

In order to verify that $\phi_{t*}\calD_0=\calD_t$, we would show the existence of smooth families of functions $G^{ij}_t$ and $F^i_t$, satisfying \[\phi_{t}^*\omega^i_t = \sum_{j} G^{ij}_t\omega^j_0 + F^i_t\theta,\forall i \tag{*} \label{eqnMoser}\]
where the matrix $$\Big(G^{ij}_t\Big)_{k\times k}$$ is non-singular, for every $t$. We solve for $$\frac{d}{dt}\Big(\phi_{t}^*\omega^i_t - \sum_{j} G^{ij}_t\omega^j_0 - F^i_t\theta\Big)=0, \forall i$$
any solution of which will satisfy (\ref{eqnMoser}). Differentiating both sides of (\ref{eqnMoser}) with respect to $t$ we get,
\begin{align*}
\sum_j \frac{dG^{ij}_t}{dt}\omega^j_0 + \frac{dF^i_t}{dt}\theta &= \frac{d}{dt}\phi_t^*\omega^i_t\\
&=\phi_t^*\Big(L_{X_t}\omega^i_t + \frac{d}{dt}\omega^i_t\Big)\\
&=\phi_t^*\Big(\iota_{X_t}d\omega^i_t + \frac{d}{dt}\omega^i_t\Big)
\end{align*}
Now from the hypothesis, $\iota_{X_t}d\omega^i_t|_{\calD_t} + \frac{d}{dt}\omega^i_t|_{\calD_t} = 0$ and hence we must have family of functions $g^{ij}_t$ and $f^i_t$ such that,
$$\iota_{X_t}d\omega^i_t + \frac{d}{dt}\omega^i_t = \sum_j g^{ij}_t \omega^j_t + f^i_t\theta$$
Pulling back by $\phi_t$ we have,
\begin{align*}
\phi_t^*\Big(\iota_{X_t}d\omega^i_t + \frac{d}{dt}\omega^i_t\Big) &= \sum_j (g^{ij}_t\circ\phi_t) \phi_t^*\omega^j_t + (f^i_t\circ\phi_t) \phi_t^*\theta\\
&=\sum_j(g^{ij}_t\circ\phi_t)\Big(\sum_pG^{jp}_t\omega^p_0 + F^j_t\theta\Big) + (f^i_t\circ\phi_t) F_t\theta\\
&=\sum_p \Big(\sum_j (g^{ij}_t\circ\phi_t)G^{jp}_t\Big)\omega^p_0 + \Big(\sum_j (g^ij_t\circ\phi_t)F^j_t + (f^i_t\circ\phi_t)F_t\Big)\theta
\end{align*}
Comparing coefficients of $\omega^i_0$ and $\theta$, we get a system of first order differential equations,
\begin{align*}
\frac{dG^{ip}_t}{dt} &= \sum_j (g^{ij}_t\circ\phi_t)G^{jp}_t, \forall i,p\\
\frac{dF^i_t}{dt} &= \sum_j (g^{ij}_t\circ\phi_t)F^j_t + (f^i_t\circ\phi_t)F_t, \forall i
\end{align*}
with the initial conditions, $$G^{ij}_0 = \partial_{ij},\qquad F^i_0 = 0$$
This system, being affine, has a solution for all $0\le t\le 1$. Since the matrix $(G^{ij}_t)=I_{k}$ at $t=0$, we must have that the matrix is non-singular in a range around $0$. Hence we have that $\phi_{t*}\calD_0=\calD_t,\phi_{t*}\calE=\calE$
\end{proof}

Next we shall discuss how to obtain the time dependent vector field $X_t$ satisfying the hypothesis of Proposition~\ref{propDiffEquationLocal}. First, suppose that we have a (locally) finite open cover $\{U^\lambda\}$ of $M$ and local fields $X^\lambda_t\in\calL|_{U_\lambda}$ on $U_\lambda$, which satisfy the relations \[\iota_{X^\lambda_t}d\omega^{i,\lambda}_t|_{\calD_t} + \frac{d}{dt}\omega^{i,\lambda}_t|_{\calD_t} = 0, \forall i=1,\ldots,k\]
on $U_\lambda$,  where $\calD_t|_{U_\lambda} = \{\omega^{i,\lambda}_t=0=\theta^\lambda\}$ and $\calE|_{U_\lambda}=\ker\theta^\lambda$. Consider a partition of unity $\{\rho_\lambda\}$ subordinate to the covering. Set, $$X_t = \sum_\lambda \rho_\lambda X^\lambda_t$$
Then $X_t$ is a global field and $X_t\in\calL$ since each $X^\lambda_t\in \calL$. By Proposition~\ref{propDiffEqLocalIndept}, the local fields $X^\lambda_t$ satisfies
\[\iota_{X^\lambda_t}d\omega^{i,\mu}_t|_{\calD_t} + \frac{d}{dt}\omega^{i,\mu}_t|_{\calD_t} = 0, \forall i=1,\ldots,k\]
on $U_\lambda\cap U_\mu$, whenever the set is non-empty. Then the global field $X_t$ satisfies the hypothesis of Proposition~\ref{propDiffEquationLocal} over each $U_\lambda$, as $X_t$ is a convex linear combination of the local fields.\\

Now in order to prove Theorem~\ref{thmIsotopyOfD}, we need to find \emph{local} field $X_t\in \calL$ such that
$$\iota_{X_t}d\omega^i_t|_{\calD_t} + \frac{d}{dt}\omega^i_t|_{\calD_t} = 0, \forall i=1,\ldots,k$$
on some open set $U$, where $\omega^i_t,\theta$ as in Proposition~\ref{propDiffEquationLocal}

\subsubsection{Obtaining the time-dependent field}
In order to obtain the local field $X_t$, we introduce a few notations. Define,
\begin{align*}
\calK^i_t &= \ker d\omega^i_t|_{\calD_t}=\Big\{X\in\calD_t\Big| d\omega^i_t(X,Y)=0, \forall Y\in\calD_t\Big\}\\
&\phantom{= \ker d\omega^i_t|_{\calD_t}\hspace{0.8ex}}=\Big\{X\in\calD_t\Big| \omega^i_t([X,Y])=0, \forall Y\in\calD_t\Big\}\\
\calJ^i_t &= \bigcap_{j\ne i} \calK^j_t=\Big\{X\in\calD_t\Big| \omega^j_t([X,Y])=0,\forall Y\in\calD_t,\forall j\ne i\Big\}\\
\calW_t &= \bigcap_j \calK^j_t = \Big\{X\in\calD_t \Big| [X,Y] \in \calD_t, \quad \forall Y\in \calD_t\Big\}
\end{align*}

\begin{lemma}
For each $i=1,\ldots,k$ we have $\calK^i_t\subset \calL,\forall t$
\end{lemma}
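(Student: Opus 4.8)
The plan is to work locally at a fixed $t$ and exploit the splitting $\calD_t \underset{loc.}{=} \calL \oplus \langle Z_t\rangle$, where $Z_t$ is any local vector field spanning $\calD_t/\calL$ (available since $\calL$ has co-rank $1$ in $\calD_t$). The essential input is already contained in the proof of the converse of Proposition~\ref{propDistToForms}: since $\calD_t$ is a generalized Engel distribution, the $1$-forms $\tau^i_t \coloneqq -\iota_{Z_t} d\omega^i_t|_{\calL}$ on $\calL$ are point-wise linearly independent, and in particular each $\tau^i_t$ is a nowhere-vanishing functional on $\calL$.

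Next I would take a vector $X\in\calK^i_t$ at a point $p$ and write $X = L + cZ_t$ with $L\in\calL_p$ and $c\in\mathbb{R}$. Testing the defining relation $d\omega^i_t(X,\cdot)|_{\calD_t}=0$ against an arbitrary $L'\in\calL_p\subset(\calD_t)_p$ gives
\[ 0 = d\omega^i_t(X,L') = d\omega^i_t(L,L') + c\,d\omega^i_t(Z_t,L') = d\omega^i_t(L,L') - c\,\tau^i_t(L'). \]
The term $d\omega^i_t(L,L')$ vanishes: extending $L,L'$ to local sections of $\calL$, the intrinsic formula $d\omega^i_t(L,L') = L(\omega^i_t(L')) - L'(\omega^i_t(L)) - \omega^i_t([L,L'])$ has all three summands equal to zero, because $\omega^i_t$ annihilates $\calD_t\supset\calL$ and $[L,L']\in\calL$ by Frobenius (the Cauchy characteristic distribution $\calL$ is integrable). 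Hence $c\,\tau^i_t|_{\calL_p}\equiv 0$, and since $\tau^i_t$ is nonzero on $\calL_p$ we get $c=0$, i.e. $X=L\in\calL_p$. As $p$ and $t$ were arbitrary, $\calK^i_t\subset\calL$.

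I do not expect a real obstacle here; the one point needing care is that the non-vanishing of $\tau^i_t$ must be invoked for the distribution $\calD_t$ at the given value of $t$ (not merely for $\calD_0$), which is legitimate because each $\calD_t$ is a generalized Engel distribution and the point-wise independence of $\{\eta^i_t\}$, i.e. Proposition~\ref{propDistToForms}(\ref{itemForms:1}), holds for all of them. It is also worth noting that $\calK^i_t$ is defined point-wise, so arguing at a single point suffices, and that the choice of the local complement $Z_t$ is immaterial since the conclusion concerns only the intrinsic objects $\calK^i_t$ and $\calL$.
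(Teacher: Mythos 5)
Your proof is correct, and it reaches the conclusion by a slightly different route than the paper. The paper stays entirely in the language of forms: it chooses a local $1$-form $\alpha_t$ with $\calL=\calD_t\cap\ker\alpha_t$, expresses the integrability of $\calL$ via the Frobenius identity $\alpha_t\wedge\omega^1_t\wedge\cdots\wedge\omega^k_t\wedge d\omega^i_t=0$, contracts with $K\in\calK^i_t$, and observes that the only surviving term is $\alpha_t(K)\,\omega^1_t\wedge\cdots\wedge\omega^k_t\wedge d\omega^i_t$; since that wedge is nowhere zero (a consequence of $\eta^i_t\neq0$), one gets $\alpha_t(K)=0$. Your argument is the pointwise vector-field dual of this: the coefficient $c$ in $X=L+cZ_t$ plays the role of $\alpha_t(K)$, the vanishing of $d\omega^i_t(L,L')$ via the bracket formula and Frobenius is the ``other terms vanish'' step, and the non-vanishing of $\tau^i_t=-\iota_{Z_t}d\omega^i_t|_{\calL}$ is equivalent, given $d\omega^i_t|_{\calL\times\calL}=0$, to the non-vanishing of $\omega^1_t\wedge\cdots\wedge\omega^k_t\wedge\theta\wedge d\omega^i_t$. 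What your version buys is explicitness: you only need each $\tau^i_t$ to be nonzero (not the full linear independence of the family), you pinpoint exactly where Proposition~\ref{propDistToForms}(\ref{itemForms:1}) enters, and you avoid having to justify that the remaining terms of the contraction vanish, which in the form-level computation requires noting that $\iota_Kd\omega^i_t$ lies in the span of $\theta,\omega^1_t,\ldots,\omega^k_t$. I see no gaps.
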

\begin{proof}
Locally, we have a family of $1$-forms $\alpha_t$, such that $\calL = \calD_t \cap \ker\alpha_t$. Since $\calL$ is integrable, from Frobenius theorem we have, in particular, $\alpha_t\wedge\omega^1_t\wedge\ldots\wedge\omega^k_t\wedge d\omega^i_t = 0, i=1,\ldots,k$. Pick some $K\in\calK^i_t$. Then we have,
$$0=\iota_K(\alpha_t\wedge\omega^1_t\wedge\ldots\wedge\omega^k_t\wedge d\omega^i_t) =\alpha_t(K)\omega^1_t\wedge\ldots\wedge\omega^k_t\wedge d\omega^i_t$$
as the other terms vanish. But $\omega^1_t\wedge\ldots\wedge\omega^k_t\wedge d\omega^i_t\ne 0$. Hence we have, $\alpha_t(K) = 0$ and hence $K\in\calL$. Thus we have, $\calK^i_t\subset\calL,\forall i,\forall t$
\end{proof}
In particular we have that $\calJ^i_t\subset\calL$ for each $i=1,\ldots,k$ and $\calW_t\subset\calL$. Also observe that for any $i$, $\calW_t = \calJ^i_t \cap \calK^i_t$.
\begin{lemma}
\label{propRankJ}
For any $0\le t\le 1$, $\calK^i_t, \calW_t$ and $\calJ^i_t$ have constant ranks. Further, $\rk\calJ^i_t = \rk\calW_t + 1$ and co-rank of $\calK^i_t$ in $\calL$ is $1$
\end{lemma}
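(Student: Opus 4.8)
The plan is to present $\calK^i_t$, $\calJ^i_t$ and $\calW_t$ as kernels of explicit bundle maps out of the (fixed) integrable distribution $\calL$; the constancy of ranks and the relation $\rk\calJ^i_t=\rk\calW_t+1$ then follow from elementary linear algebra.

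Since $\calL$ has co-rank $1$ in $\calD_t$, on $U$ we may write $\calD_t\underset{loc.}{=}\calL\oplus\langle Z_t\rangle$ and consider the map
\[
\Psi_t:\calL\to\calE/\calD_t,\qquad X\mapsto[X,Z_t]\bmod\calD_t ,
\]
which is valued in $\calE/\calD_t$ because $[\calL,\calE]\subset\calE$, and is $C^\infty(U)$-linear in $X$ since $[fX,Z_t]=f[X,Z_t]-Z_t(f)X\equiv f[X,Z_t]\bmod\calD_t$ (as $X\in\calL\subset\calD_t$); thus $\Psi_t$ is a bundle map. Exactly as in the proof of Proposition~\ref{propDistToForms}, integrability of $\calL$ --- which makes the bracket of two sections of $\calD_t$ reduce modulo $\calD_t$ to a combination of terms $[L,Z_t]$ with $L\in\calL$ --- together with $\calD_t^2=\calD_t+[\calD_t,\calD_t]=\calE$ forces $\Psi_t$ to be pointwise surjective onto the rank-$k$ bundle $\calE/\calD_t$.

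Next I would identify the three distributions. Let $\bar\omega^i_t\in(\calE/\calD_t)^*$ denote the functional induced by $\omega^i_t$; these form a pointwise basis of $(\calE/\calD_t)^*$. Using the previous lemma that $\calK^i_t\subset\calL$ together with a short computation (for $X\in\calL$ and $Y=L+gZ_t\in\calD_t$ one has $\omega^i_t([X,Y])=g\,\omega^i_t([X,Z_t])=g\,\bar\omega^i_t(\Psi_t X)$, the other terms being killed by $\omega^i_t$), one obtains
\[
\calK^i_t=\ker(\bar\omega^i_t\circ\Psi_t),\qquad \calJ^i_t=\bigcap_{j\ne i}\ker(\bar\omega^j_t\circ\Psi_t),\qquad \calW_t=\ker\Psi_t ,
\]
the last two by intersecting (and consistently with $\calW_t=\calJ^i_t\cap\calK^i_t$).

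Finally, since $\Psi_t$ is pointwise surjective and $\{\bar\omega^i_t\}$ is a pointwise basis, for every subset $S\subset\{1,\dots,k\}$ the bundle map $\calL\to\mathbb{R}^{|S|}$, $X\mapsto(\bar\omega^j_t(\Psi_t X))_{j\in S}$, is pointwise surjective. Applying this with $S=\{i\}$, with $S=\{1,\dots,k\}\setminus\{i\}$, and with $S=\{1,\dots,k\}$ shows that $\calK^i_t$, $\calJ^i_t$ and $\calW_t$ are distributions of co-ranks $1$, $k-1$ and $k$ in $\calL$, respectively. As these co-ranks are constants independent of the point and of $t$, all three have constant rank, the co-rank of $\calK^i_t$ in $\calL$ is $1$, and $\rk\calJ^i_t=\rk\calL-(k-1)=(\rk\calL-k)+1=\rk\calW_t+1$. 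The one step requiring care is the passage from the sheaf identity $\calD_t^2=\calE$ to the pointwise surjectivity of $\Psi_t$: this is exactly where integrability of $\calL$ is essential, since it guarantees that no direction of $\calE/\calD_t$ is produced by a bracket other than one of the form $[X,Z_t]$ with $X\in\calL$.
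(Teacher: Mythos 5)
Your proof is correct and follows essentially the same route as the paper: both realize $\calW_t$, $\calJ^i_t$, $\calK^i_t$ as kernels of the bundle map $L\mapsto[Z,L]\bmod\calD_t$ (or its compositions with projections determined by the dual basis of $\{\bar\omega^j_t\}$), and deduce constant rank and the co-rank counts from its pointwise surjectivity, which rests on $\calD_t^2=\calE$ and the integrability of $\calL$. Your phrasing via the components $\bar\omega^j_t\circ\Psi_t$ is just a repackaging of the paper's quotient maps $\Psi_i$ and $\Phi_i$.
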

\begin{proof}
Fix $0\le t\le 1$. Since $\calL$ has co-rank $1$ in $\calD_t$, choose a vector field $Z$ such that $\calD_t = \calL \oplus \langle Z \rangle$. Then consider the map,
\begin{align*}
\Psi : \calL &\to \calE/\calD_t\\
L &\mapsto [Z,L]\mod\calD_t
\end{align*}
$\Psi$ is a bundle map. Also $\Psi$ has full rank, since $\calD_t^2=\calE$ and $\calL$ is integrable. Clearly $\calW_t$ is contained in the kernel of $\Psi$. Also for $\Psi(L)=0$, i.e., $[Z,L]\in\calD_t$ we have that $[L,X]\in\calD_t$ for any $X\in\calD_t$, since $\calL$ is integrable. Thus $L\in\calW_t$ and so $\calW_t=\ker\Psi$. Hence $\calW_t$ is a constant rank distribution.

Since the induced forms $\omega^i_t|_{\calE/\calD_t}$ are (point-wise) linearly independent, choose some vector fields $\{V_i\}$ from $\calE_t$, such that $\{\bar{V}_i=V_i \mod \calD_t\}$ is the corresponding dual basis. Consider the map,
\begin{align*}
\Psi_i : \calL &\to \calE/(\calD_t\oplus\langle V_i\rangle)\\
L &\mapsto [Z,L]\mod (\calD_t\oplus\langle V_i\rangle)
\end{align*}
Again $\Psi_i$ is a full rank bundle map. As $\bar{V}_i$ is dual to $\omega^i_t|_{\calE/\calD_t}$, for any $L\in\calJ^i_t$, we have that $$[Z,L] \mod\calD_t = f_i \bar{V_i}$$ for some function $f_i$, and thus $\calJ^i_t\subset\ker\Psi_i$. Conversely, suppose $\Psi_i(X)=0$,i.e., $[Z,X] \in \calD_t \oplus \langle V_i\rangle$. But then for $j\ne i$, $\omega^j_t[Z,X] = 0$ and so $X\in \calJ^i_t$. Thus $\calJ^i_t=\ker\Psi_i$, proving that $\calJ^i_t$ is of constant rank. Similarly define the map,
\begin{align*}
\Phi_i : \calL &\to \calE/(\calD_t\oplus\langle V_1,\ldots,\hat{V_i},\ldots,V_k\rangle)\\
L &\mapsto [Z,L]\mod (\calD_t\oplus\langle V_1,\ldots,\hat{V_i},\ldots,V_k\rangle)
\end{align*}
and observe that $\ker\Phi_i = \calK^i_t$. Since $\Phi_i$ is again a full rank bundle map, $\calK^i_t$ is of constant rank.

Clearly we have that $\rk\calJ^i_t = \rk\calW_t + 1$ and co-rank of $\calK^i_t$ in $\calL$ is $1$, for any $0\le t\le 1$.
\end{proof}

Now we find the local field.
\begin{lemma}
For each $i$ there is a local field $X^i_t\in\calJ^i_t$ such that $\iota_{X^i_t}d\omega^i_t|_{\calD_t} + \frac{d}{dt}\omega^i_t|_{\calD_t} = 0$
\end{lemma}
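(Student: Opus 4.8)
The plan is to solve the single equation $\iota_{X^i_t}d\omega^i_t|_{\calD_t} + \frac{d}{dt}\omega^i_t|_{\calD_t} = 0$ for a section $X^i_t$ of $\calJ^i_t$, by reducing it to a nondegenerate linear-algebra problem. First I would note that $\alpha_t := \frac{d}{dt}\omega^i_t|_{\calD_t}$ is a $1$-form on $\calD_t$; since $\omega^i_t|_{\calD_t} = 0$ for all $t$, differentiating gives $\frac{d}{dt}\omega^i_t|_{\calD_t} = -\omega^i_t\circ(\text{variation of }\calD_t)$, but more usefully it is simply \emph{some} smooth family of $1$-forms on the bundle $\calD_t$. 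The equation asks for $X^i_t$ with $\iota_{X^i_t}(d\omega^i_t|_{\calD_t}) = -\alpha_t$ as $1$-forms on $\calD_t$, subject to $X^i_t \in \calJ^i_t$. So the real content is that the bilinear form $d\omega^i_t|_{\calD_t}$, when restricted so that its "input slot" ranges over $\calJ^i_t$ and its "output" is read as a functional on $\calD_t$, is surjective onto the appropriate space containing $\alpha_t$.

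The key step is to identify the kernel and image of the map $\calJ^i_t \to \calD_t^*$, $X \mapsto \iota_X d\omega^i_t|_{\calD_t}$. By definition of $\calK^i_t = \ker d\omega^i_t|_{\calD_t}$ and of $\calW_t = \calJ^i_t\cap\calK^i_t$, the kernel of this restricted map is exactly $\calW_t$. By Lemma~\ref{propRankJ}, $\rk\calJ^i_t = \rk\calW_t + 1$, so the image is a \emph{line} in $\calD_t^*$, varying smoothly with $t$. I must then check that $\alpha_t = \frac{d}{dt}\omega^i_t|_{\calD_t}$ lands in this line. The natural way to see this: first observe that $\iota_X d\omega^i_t|_{\calD_t}$, for $X\in\calJ^i_t\subset\calL$, annihilates $\calL$ — indeed for $L'\in\calL$ we have $d\omega^i_t(X,L') = X\omega^i_t(L') - L'\omega^i_t(X) - \omega^i_t([X,L']) = -\omega^i_t([X,L'])$, and $[X,L']\in\calL\subset\calD_t$ by integrability of $\calL$, so this vanishes; hence the image lies in the line $(\calD_t/\calL)^*\subset\calD_t^*$. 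Symmetrically, $\frac{d}{dt}\omega^i_t|_{\calD_t}$ also annihilates $\calL$: since $\calL$ is $t$-independent and $\omega^i_t|_\calL \equiv 0$ for every $t$ (because $\calL\subset\calD_t$), differentiation in $t$ gives $\frac{d}{dt}\omega^i_t|_\calL = 0$. Thus both sides live in the one-dimensional space $(\calD_t/\calL)^*$; it remains only to rule out that $\iota_X d\omega^i_t|_{\calD_t}$ is \emph{identically} zero on $\calJ^i_t$, i.e. to show the line $(\calD_t/\calL)^*$ is actually hit — but that is precisely the rank statement $\rk\calJ^i_t = \rk\calW_t + 1$ together with $\calW_t = \ker$, giving a nonzero, hence spanning, image. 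Evaluating both sides on a vector $Z$ with $\calD_t = \calL\oplus\langle Z\rangle$ then reduces the equation to dividing one scalar function by a nowhere-zero scalar function, which solves for $X^i_t$ modulo $\calW_t$; picking a smooth complement of $\calW_t$ in $\calJ^i_t$ gives a smooth $X^i_t$.

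The main obstacle I anticipate is the bookkeeping needed to make "$\frac{d}{dt}\omega^i_t|_{\calD_t}$ is a well-defined $1$-form on the (moving) bundle $\calD_t$" precise and to verify it annihilates $\calL$ rigorously — one must be careful that restricting to $\calD_t$ and differentiating in $t$ commute in the relevant sense, which is where the hypothesis $\calL\subset\calD_t$ for all $t$ (so $\calL$ is a fixed subbundle on which every $\omega^i_t$ vanishes) does the work. Once that is in hand, nondegeneracy is immediate from Lemma~\ref{propRankJ}, and smoothness in $t$ follows because all the bundles $\calJ^i_t$, $\calW_t$ have locally constant rank and vary smoothly, so a smooth complement and a smooth section can be chosen locally. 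I would also remark that the resulting $X^i_t$ automatically lies in $\calL$ (since $\calJ^i_t\subset\calL$), which is needed downstream; the full local field is then assembled from the $X^i_t$ in the subsequent lemma.
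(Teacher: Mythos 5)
Your argument is correct, and it is organized differently from the paper's. The paper decomposes $\calD_t=\calK^i_t\oplus\calV^i_t$ with $\calV^i_t$ a rank-$2$ complement containing a line $\calU^i_t$ complementary to $\calW_t$ in $\calJ^i_t$, solves the equation on $\calV^i_t$ using non-degeneracy of $d\omega^i_t$ there, observes that both terms vanish on $\calK^i_t$, and then needs an extra step (the symplectic-orthogonal of a line in a rank-$2$ symplectic space is itself) to conclude a posteriori that the solution lies in $\calU^i_t\subset\calJ^i_t$. You instead work dually: you study the map $\calJ^i_t\to\calD_t^*$, $X\mapsto\iota_Xd\omega^i_t|_{\calD_t}$, identify its kernel as $\calW_t$ so that by Lemma~\ref{propRankJ} its image is a line, show that this line and the target $-\tfrac{d}{dt}\omega^i_t|_{\calD_t}$ both lie in the one-dimensional annihilator of $\calL$ in $\calD_t^*$ (the first because $[\calL,\calL]\subset\calL\subset\calD_t$ makes $\calL$ isotropic for $d\omega^i_t|_{\calD_t}$, the second because $\calL$ is a fixed subbundle with $\omega^i_s|_\calL=0$ for all $s$), and conclude solvability by a dimension count. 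Both proofs rest on the same two inputs --- the rank statement of Lemma~\ref{propRankJ} and the vanishing of $\tfrac{d}{dt}\omega^i_t$ on $\calL$ --- but yours produces $X^i_t\in\calJ^i_t$ directly and skips the paper's final containment step, at the cost of having to verify explicitly that the right-hand side lies in the image (which the paper gets for free from non-degeneracy on $\calV^i_t$). Your closing remarks on smoothness and on the well-definedness of $\tfrac{d}{dt}\omega^i_t|_{\calD_t}$ are fine; the latter is actually a non-issue since $\tfrac{d}{dt}\omega^i_t$ is a $1$-form on the ambient open set which is merely being restricted to $\calD_t$.
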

\begin{proof}
From Lemma~\ref{propRankJ} we have that $\calJ^i_t = \calW\oplus\calU^i_t$ for some line field $\calU^i_t\subset\calJ^i_t$. Clearly $\calU^i_t\not\subset\calK^i_t$. Then we can get, $\calD_t = \calK^i_t \oplus \calV^i_t$ such that $\calU^i_t\subset\calV^i_t, \forall t$. As co-rank of $\calK^i_t$ in $\calL$ is $1$ we have that $\calV^i_t$ is of constant rank with $\rk\calV^i_t = 2$. Since by definition $\calK^i_t =\ker d\omega^i_t|_{\calD_t}$, we have that $d\omega^i_t$ is non-degenerate over $\calV^i_t$. Hence we have a solution $X^i_t\in\calV^i_t$ such that, $$\iota_{X^i_t}d\omega^i_t|_{\calV^i_t} + \frac{d}{dt}\omega^i_t|_{\calV^i_t} = 0$$
Now, $\iota_{X^i_t}d\omega^i_t|_{\calK^i_t}=0$. Also since $\calK^i_t\subset\calL\subset\calD_s$ for every parameter $s$, we have that $\frac{d}{dt}\omega^i_t|_{\calK^i_t}=0$, since $\calD_s\subset\ker\omega^i_s$. Thus we also have that, $\iota_{X^i_t}d\omega^i_t|_{\calK^i_t} + \frac{d}{dt}\omega^i_t|_{\calK^i_t} = 0$. Combining we get that $$\iota_{X^i_t}d\omega^i_t|_{\calD_t} + \frac{d}{dt}\omega^i_t|_{\calD_t} = 0$$
as required. We now show that $X^i_t\in\calU^i_t$, which will yield that $X^i_i\in\calJ^i_t$

Restricting to $\calU^i_t$ we see that, $\iota_{X^i_t}d\omega^i_t|_{\calU^i_t} =- \frac{d}{dt}\omega^i_t|_{\calU^i_t}$. But $\calU^i_t\subset\calJ^i_t\subset\calL\subset\calD_s\subset\ker\omega^i_s,\forall s$ and so $\frac{d}{dt}\omega^i_t|_{\calU^i_t}=0$. Thus we have that, $$\iota_{X^i_t}d\omega^i_t|_{\calU^i_t}=0$$
Now $X^i_t\in\calV^i_t$ and $\calU^i_t\subset\calV^i_t$ is a line field. But $d\omega^i_t$ is non-degenerate on $\calV^i_t$, with $\rk\calV^i_t=2$. Thus the only way $\iota_{X^i_t}d\omega^i_t|_{\calU^i_t}=0$ is possible if $X^i_t\in\calU^i_t$. This completes the proof.
\end{proof}

Set $$X_t=\sum_i X^i_t.$$ Since each $X^i_t\in \calJ^i_t\subset\calL$, we have that $X_t\in\calL$.
\begin{lemma}
$\iota_{X_t}d\omega^i_t|_{\calD_t} + \frac{d}{dt}\omega^i_t|_{\calD_t} = 0,\forall i=1,\ldots,k$
\end{lemma}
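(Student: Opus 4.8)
The claim is that the vector field $X_t = \sum_i X^i_t$, assembled from the pieces $X^i_t \in \calJ^i_t$ produced by the previous lemma, satisfies the \emph{full} system $\iota_{X_t}d\omega^i_t|_{\calD_t} + \frac{d}{dt}\omega^i_t|_{\calD_t} = 0$ for every $i$. The key point is that $\calJ^i_t \subset \calK^j_t$ for every $j \ne i$, by the very definition $\calJ^i_t = \bigcap_{j\ne i}\calK^j_t$. So I would fix an index $i$ and expand
\[
\iota_{X_t}d\omega^i_t\big|_{\calD_t} = \sum_j \iota_{X^j_t}d\omega^i_t\big|_{\calD_t} = \iota_{X^i_t}d\omega^i_t\big|_{\calD_t} + \sum_{j\ne i}\iota_{X^j_t}d\omega^i_t\big|_{\calD_t}.
\]
For $j \ne i$ we have $X^j_t \in \calJ^j_t \subset \calK^i_t = \ker d\omega^i_t|_{\calD_t}$, so each term in the sum over $j \ne i$ vanishes identically on $\calD_t$. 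Hence $\iota_{X_t}d\omega^i_t|_{\calD_t} = \iota_{X^i_t}d\omega^i_t|_{\calD_t}$, and the previous lemma gives $\iota_{X^i_t}d\omega^i_t|_{\calD_t} = -\frac{d}{dt}\omega^i_t|_{\calD_t}$, which is exactly what is needed.

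That is the entire argument; there is no real obstacle, only the bookkeeping of checking that the cross terms lie in the right kernel. The one subtlety worth spelling out is \emph{why} $\iota_{X^j_t}d\omega^i_t|_{\calD_t} = 0$ when $X^j_t \in \calK^i_t$: by the alternative description of $\calK^i_t$ recorded earlier, $X \in \calK^i_t$ means $d\omega^i_t(X,Y) = 0$ for all $Y \in \calD_t$, i.e. $\iota_X d\omega^i_t$ annihilates $\calD_t$, i.e. $\iota_X d\omega^i_t|_{\calD_t} = 0$. Applying this with $X = X^j_t$ (which lies in $\calJ^j_t \subset \calK^i_t$ since $i \ne j$) closes the gap.

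With this lemma in hand, the local field $X_t$ satisfies the hypotheses of Proposition~\ref{propDiffEquationLocal}, and the patching argument via a partition of unity (using Proposition~\ref{propDiffEqLocalIndept} to see that the local solutions agree on overlaps and that convex combinations remain solutions) produces a global $X_t \in \calL$. Integrating $X_t$ yields the isotopy $\phi_t$ with $\phi_{t*}\calD_0 = \calD_t$, $\phi_{t*}\calE = \calE$, and $\phi_{t*}\calL = \calL$, completing the proof of Theorem~\ref{thmIsotopyOfD}.
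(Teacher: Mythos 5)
Your argument is exactly the paper's: split $X_t=\sum_j X^j_t$, kill the cross terms via $X^j_t\in\calJ^j_t\subset\calK^i_t=\ker d\omega^i_t|_{\calD_t}$ for $j\ne i$, and invoke the previous lemma for the $j=i$ term. Correct and essentially identical to the paper's proof.
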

\begin{proof}
Since $X^i_t\in\calJ^i_t$, we have that $\iota_{X^i_t}d\omega^j_t|_{\calD_t} = 0\forall j\ne i$. Thus,
\begin{align*}
\iota_{X_t}d\omega^i_t|_{\calD_t} &= \sum_j \iota_{X^j_t}d\omega^i_t|_{\calD_t}\\
&=\iota_{X^i_t}d\omega^i_t|_{\calD_t}\\
&=-\frac{d}{dt}\omega^i_t|_{\calD_t}
\end{align*}
that is to say, $\iota_{X_t}d\omega^i_t|_{\calD_t} + \frac{d}{dt}\omega^i_t|_{\calD_t} = 0$ for every $i=1,\ldots,k$.
\end{proof}

\subsection{Proof of Theorem~\ref{thmGrayGeneral}}

We shall see that Theorem~\ref{thmGrayGeneral} follows from Theorem~\ref{thmIsotopyOfD}
by using the following lemma.
\begin{lemma}
\label{lemmaIsotopyOfE}
Suppose we are given a one-parameter family of co-rank $1$ distributions $\calE_t$ on a compact manifold $M$, such that Cauchy characteristic distribution $\calL_t$ of $\calE_t$ is independent of $t$, say $\calL_t=\calL$ and $TM=\calE_t^2$. Then there exists an isotopy $\phi_t$ of $M$ such that $$\phi_{t*}\calE_0 = \calE_t,\qquad \phi_{t*} \calL = \calL$$
\end{lemma}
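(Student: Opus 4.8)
The plan is to run a Moser-type (Gray-type) deformation argument on $\calE_t$ alone, mirroring the structure of the proof of Theorem~\ref{thmIsotopyOfD} but now working only with the single defining $1$-form. Locally write $\calE_t = \ker\theta_t$ for a smooth family of $1$-forms $\theta_t$, and look for a time-dependent vector field $X_t$ with $X_t\in\calL$ satisfying
\begin{equation*}
\iota_{X_t}d\theta_t|_{\calE_t} + \tfrac{d}{dt}\theta_t\big|_{\calE_t} = 0.
\end{equation*}
Exactly as in Proposition~\ref{propDiffEqLocalIndept} (the even-contact analogue: any two local defining forms differ by $\theta_t' = f_t\theta_t$, and the $df_t$-term dies upon restriction to $\calE_t$ since $\theta_t|_{\calE_t}=0$ and $\theta_t(X_t)=0$), such an $X_t$ is independent of the local choice of $\theta_t$, so local solutions on a cover $\{U^\lambda\}$ patch via a partition of unity into a global field still lying in $\calL$. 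And as in Proposition~\ref{propDiffEquationLocal}, once $X_t\in\calL$ is found with the displayed property, its flow $\phi_t$ satisfies $\phi_{t*}\calL=\calL$ automatically — because $X_t\in\calL$ and $\calL$ is integrable, so $\phi_{t*}$ preserves the foliation tangent to $\calL$, hence preserves the distribution $\calL$; and then the Moser identity $\phi_t^*\theta_t = F_t\theta_0$ (solving an auxiliary affine ODE for $F_t$) gives $\phi_{t*}\calE_0=\calE_t$.

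The crux is therefore producing the local field $X_t\in\calL$. Here is where the hypothesis $\calE_t^2 = TM$ and the $t$-independence of $\calL$ do the work. Restricting $d\theta_t$ to $\calE_t$ and then to the quotient $\calE_t/\calL$: by the very definition of the Cauchy characteristic distribution, $\calL = \ker(d\theta_t|_{\calE_t})$, so $d\theta_t$ descends to a \emph{nondegenerate} $2$-form on $\calE_t/\calL$ (here one uses $\calE_t^2=TM$, equivalently $d\theta_t|_{\calE_t}\ne 0$, to know $\calL$ is a \emph{proper} subbundle and $\calE_t/\calL$ has even rank $\ge 2$). Meanwhile the error term $\tfrac{d}{dt}\theta_t|_{\calE_t}$ vanishes on $\calL$: indeed $\calL\subset\calE_s$ for every $s$, so $\theta_s|_{\calL}\equiv 0$ for all $s$, hence its $t$-derivative restricted to $\calL$ is zero. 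Thus $\tfrac{d}{dt}\theta_t|_{\calE_t}$ is the pullback of a well-defined $1$-form on $\calE_t/\calL$, and nondegeneracy of $d\theta_t$ on $\calE_t/\calL$ lets us solve uniquely for $X_t \bmod \calL$ as a section of $\calE_t/\calL$; lifting back, we get $X_t\in\calE_t$ with $\iota_{X_t}d\theta_t|_{\calE_t} + \tfrac{d}{dt}\theta_t|_{\calE_t}=0$, and the lift can be adjusted by $\calL$ with no effect. But we need $X_t\in\calL$, not merely $X_t\in\calE_t$. This is automatic: the equation $\iota_{X_t}d\theta_t|_{\calE_t} = -\tfrac{d}{dt}\theta_t|_{\calE_t}$, read on the quotient $\calE_t/\calL$ (where $d\theta_t$ is nondegenerate), has a \emph{unique} solution $\bar X_t \in \calE_t/\calL$; since the class of the $\calL$-valued candidate $0$ already solves it iff $\tfrac{d}{dt}\theta_t|_{\calE_t}$ vanishes on all of $\calE_t$ — which it need not — we instead simply observe that any solution $X_t\in\calE_t$ differs from a fixed one by an element of $\calL$, and we are free to add such an element. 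To force $X_t\in\calL$ we argue as in the last lemma of \S4: pick a complement $\calE_t = \calL\oplus\calV_t$ with $\calV_t$ of the right even rank, note $d\theta_t$ is nondegenerate on $\calV_t$, solve there for $X_t\in\calV_t$, and then check $X_t$ actually lands in $\calL\cap\calV_t=0$...

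Let me restate that last point cleanly, since it is the genuine obstacle. The issue is that $-\tfrac{d}{dt}\theta_t|_{\calE_t}$ is a $1$-form on $\calE_t$ that vanishes on $\calL$ but in general \emph{not} on a complement $\calV_t$, so solving $\iota_{X_t}d\theta_t|_{\calE_t}=-\tfrac{d}{dt}\theta_t|_{\calE_t}$ on the symplectic bundle $(\calE_t/\calL,\,\overline{d\theta_t})$ gives a nonzero class $\bar X_t$, and \emph{any} lift $X_t\in\calE_t$ of $\bar X_t$ solves the equation on $\calE_t$ (the $\calL$-ambiguity is harmless because $\iota_L d\theta_t|_{\calE_t}=0$ for $L\in\calL$). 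So we do \emph{not} get $X_t\in\calL$ — rather, we do not need to. Re-examining Proposition~\ref{propDiffEquationLocal}'s requirement: what is used to conclude $\phi_{t*}\calL=\calL$ there is that $X_t\in\calL$; but for the $\calE$-only statement we instead only need $\phi_{t*}\calL=\calL$, and that follows provided the flow preserves $\calL$, for which it suffices that $X_t\in\calE_t$ together with the fact that $\calL$ is the canonically-defined Cauchy characteristic of $\calE_t=\phi_{t*}\calE_0$ — \emph{once we know} $\phi_{t*}\calE_0=\calE_t$. Hence the honest order is: (i) solve for $X_t\in\calE_t$ locally, patch globally by partition of unity (using Proposition~\ref{propDiffEqLocalIndept}-type independence, which still holds), (ii) run the Moser ODE to get $\phi_t^*\theta_t=F_t\theta_0$, hence $\phi_{t*}\calE_0=\calE_t$, (iii) conclude $\phi_{t*}\calL = \phi_{t*}\calL(\calE_0)=\calL(\calE_t)=\calL$ by naturality of the Cauchy characteristic distribution under diffeomorphisms, using the hypothesis $\calL_t=\calL$. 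The main obstacle, then, is purely bookkeeping: verifying that the patched global $X_t$ still solves the defining equation over each chart (convex-combination argument, verbatim as in \S4) and that the Cauchy characteristic is diffeomorphism-natural — both routine — so that the substantive content is just the local solvability, which is immediate from nondegeneracy of $d\theta_t$ on $\calE_t/\calL$, itself a restatement of $\calE_t^2=TM$.
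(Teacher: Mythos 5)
Your proposal is correct and, after the self-correction in the middle, lands on essentially the same argument as the paper: the paper also solves $\iota_{X_t}d\theta_t|_{\calE_t}=-\tfrac{d}{dt}\theta_t|_{\calE_t}$ for $X_t\in\calE_t$ (choosing the lift in a metric complement $\calV_t=\calL^{\perp_{\calE_t}}$ rather than working on the quotient $\calE_t/\calL$, which is only a cosmetic difference), uses $\calL\subset\calE_s$ for all $s$ to kill the error term on $\calL$, patches by a partition of unity, runs the Moser ODE $\phi_t^*\theta_t=H_t\theta_0$, and obtains $\phi_{t*}\calL=\calL$ from naturality of the Cauchy characteristic together with $\calL_t=\calL$. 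Your explicit observation that $X_t$ need not (and generally cannot) be taken in $\calL$, with $\phi_{t*}\calL=\calL$ recovered a posteriori, is exactly the right reading of the paper's proof.
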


\begin{proof}[Proof of Theorem~\ref{thmGrayGeneral}]
Since $\calD_t^3=TM$, in particular we have that $TM = \calE_t^2$. Hence, using Lemma~\ref{lemmaIsotopyOfE} we get an isotopy $\phi_t$ that fixes $\calL$ and $\phi_{t*}\calE_0=\calE_t$. Since $\phi_t$ is a diffeomorphism, we get $\phi_{t*}^{-1}\calE_t=\calE_0$. Set, $\calD_t^\prime =\phi_{t*}^{-1}\calD_t$. Clearly we have the flag, $\calL\subset\calD_t^\prime\subset\calE_0$, where $\calL$ is the Cauchy characteristic distribution of $\calE_0$. Since Lie brackets are preserved under push-forwards by diffeomorphisms, we have that $\calD_t^{\prime2}=\calE_0$ and $\calD_t^{\prime3} = TM$.

Now using Theorem~\ref{thmIsotopyOfD}, we get another isotopy $\psi_t$ that fixes both $\calL$ and $\calE_0$, and $\psi_{t*}\calD_0=\calD_t^\prime$. Then, $\psi_{t*}^{-1}\calD_t^\prime=\calD_0$. Set, $\Phi_t = \psi_t^{-1} \circ \phi_t^{-1}$. Then $\Phi_t$ is the desired isotopy such that,
$$\Phi_{t*}\calD_t = \calD_0,\qquad \Phi_{t*}\calL = \calL$$
\end{proof}

\subsubsection{Proof of Lemma~\ref{lemmaIsotopyOfE}}
The proof can be found in \cite{montGoursat}. We reproduce it here with minor modification.\\

We have, $\calE_t \underset{loc.}{=} \ker \theta_t$. By hypothesis, $\calL = \ker d\theta_t|_{\calE_t},\forall t$. Suppose, $\calE_t = \calL \oplus \calV_t$, where $\calV_t=\calL^{\perp_{\calE_t}}$ with respect to some fixed choice of a Riemannian metric. Then $d\theta_t$ is non-degenerate on the sub-space $\calV_t$. So there exists a unique (local) field $X_t\in V_t$ such that $$\iota_{X_t} d\theta_t|_{\calV_t} = - \frac{d}{dt}\theta_t|_{\calV_t}$$
Since $X_t\in\calE_t$ and $\calL = \ker d\theta_t|_{\calE_t}$, we have $\iota_{X_t}d\theta_t|_\calL = 0$. Also, since $\calL \subset\calE_s=\ker\theta_s, \forall s$, we have that $\frac{d}{dt}\theta_t|_\calL = 0$. Then combining the two we have, $$\iota_{X_t} d\theta_t|_{\calE_t} = - \frac{d}{dt}\theta_t|_{\calE_t}$$
As $\iota_{X_t} d\theta_t + \frac{d}{dt}\theta_t=0$ when restricted to $\calE_t$, we have that $$\iota_{X_t} d\theta_t + \frac{d}{dt}\theta_t=h_t\theta_t$$
for some family of functions $h_t$.\\

Now integrating $X_t$ we get a local flow $\phi_t$. Then,
\begin{align*}
\frac{d}{dt} \phi_t^* \theta_t &= \phi_t^*\Big(L_{X_t}\theta_t + \frac{d}{dt}\theta_t\Big)\\
&= \phi_t^*\Big(\iota_{X_t}d\theta_t + \frac{d}{dt}\theta_t\Big)\\
&= \phi_t^*(h_t\theta_t)\\
&= g_t\phi_t^*\theta_t
\end{align*}
where $g_t = h_t\circ\phi_t$. Now we would like to have, $\phi_t^*\theta_t = H_t\theta_0$ for some non-zero function $H_t$. Differentiating both sides, we get $$\frac{dH_t}{dt}\theta_0 = \frac{d}{dt}\phi_t^*\theta_t = g_t\phi_t*\theta_t = g_tH_t\theta_0$$
Thus we have the differential equation, $$\frac{dH_t}{dt} = g_tH_t$$ with the initial condition, $H_0 = 1$. This has a solution, $$H_t = \exp \int_{0}^{t} g_s ds$$
which is clearly non-zero. Thus for the field $X_t$, the local flow $\phi_t$ satisfies $$\phi_{t*}\calE_0 = \calE_t$$ and fixes the Cauchy characteristic distribution $\calL$.\\

Now in order to get a global flow, we use partition of unity. First note that the vector field $X_t$ doesn't depend on the choice of the local defining form $\theta_t$. Consider $\eta_t = f_t\theta_t$ for some non-zero function $f_t$. Then, $\calE_t = \ker\eta_t$. Now, $d\eta_t = df_t\wedge\theta_t + f_t\wedge d\theta_t$. So,
\begin{align*}
\iota_{X_t} d\eta_t|_{\calE_t} &= \iota_{X_t}df_t \theta_t|_{\calE_t} - \iota_{X_t}\theta_tdf_t|_{\calE_t} + f_t\iota_{X_t}d\theta_t|_{\calE_t}\\
&= f_t\iota_{X_t}d\theta_t|_{\calE_t}
\end{align*}
since $\theta_t(X_t)=0$ and $\theta_t|_{\calE_t}=0$. Also, $\frac{d}{dt}\eta_t|_{\calE_t} = \frac{df_t}{dt}\theta_t + f_t\frac{d}{dt}\theta_t|_{\calE_t}=f_t\frac{d}{dt}\theta_t|_{\calE_t}$. Hence we have, $$\iota_{X_t}d\eta_t|_{\calE_t} + \frac{d}{dt}\eta_t|_{\calE_t} = 0$$
Thus $X_t$ only depends on $\calE_t$.\\

Now get a locally finite open cover $\{U_\lambda\}$ of $M$ such that $\calE_t|_{U_\lambda} = \ker\theta_t^\lambda$ for local form $\theta_t^\lambda$. Get unique local fields $X_t^\lambda$ as above. Also consider a partition of unity $\{\rho_\lambda\}$ subordinate to the open cover. Set, $$X_t = \sum_{\lambda} \rho_\lambda X_t^\lambda$$
Then $X_t$ is global vector field. Since $M$ is closed, integrating $X_t$ we get a global isotopy $\Phi_t$ on $M$. Also $X_t$ satisfies $\iota_{X_t}d\theta_t^\lambda + \frac{d}{dt}\theta_t^\lambda = 0$, for any $\lambda$. Thus $\Phi_t$ is the required isotopy of $M$ such that $$\Phi_{t*}\calE_0 = \calE_t.$$

\section{Normal Forms}
In this section we shall obtain the normal form of the generators of the Pfaffian system defining a generalized Engel structure $\mathcal D$. Suppose, $\calD$ is of co-rank $k+1$, where $k=2l+1$, and $\calL\subset\calD\subset\calE\subset TM$ is the associated with the canonical flag on $M$. Suppose $\calS_0$ and $\calS_1$ are the Pfaffian systems annihilating $\calD$ and $\calE$ respectively. Since $\calD\subset\calE$, we have that $\calS_0\supset\calS_1$. Suppose $\calS_1=\langle \theta\rangle$ and $\calS_0=\langle \theta,\omega^1,\ldots,\omega^k\rangle$ locally. Then by Proposition \ref{propDistToForms} we get certain relations among these forms. We want to get a standard normal forms for some basis of $\calS_0$ and $\calS_1$.\\

Since $\theta\wedge d\theta^{l+1}\ne 0$ and $\theta\wedge d\theta^{l+2}= 0$, around a point $p\in M$ we have some co-ordinate system (Theorem 3.1 in \cite{bryantExteriorDiff}) such that $\calE$ is the kernel of $$\Theta= dz - \sum_{i=1}^{l+1} x_{i+l+1}dx_i$$
Clearly $\calS_1=\langle\Theta\rangle$ and $\{\Theta, \omega^i\}$ is a Pfaffian system associated to the given generalized Engel structur. From $\omega^i\wedge\Theta\wedge d\Theta^{l+1}=0,\forall i$, we have that $0=\omega^i\wedge dx_1\wedge\ldots\wedge dx_{2l+2}\wedge dz$. Hence, there exist functions $a^{ij},b^i$ such that, $$\omega^i = \sum_{j=1}^{2l+2} a^{ij} dx_j + b^idz$$
Now we have, in particular, $\omega^1\wedge\ldots\wedge\omega^k\wedge\Theta \ne 0$. Therefore, the matrix
\[\begin{pmatrix}
a^{11} &\ldots &a^{2l+1,1} &-x_{l+2}\\
\vdots &\vdots &\vdots &\vdots\\
a^{1,l+1} &\ldots &a^{2l+1,l+1} &-x_{2l+2}\\
a^{1,l+2} &\ldots &a^{2l+1,l+2} &0\\
\vdots &\vdots &\vdots &\vdots\\
a^{1,2l+2} &\ldots &a^{2l+1,2l+2} &0\\
b^1 &\ldots &b^{2l+1} &1
\end{pmatrix}_{(2l+3) \times (2l+2)}\]
has full rank $2l+2$ everywhere. Evaluating this at the point $p$, we observe that the last column is $\begin{pmatrix}0 &\ldots &0 &1\end{pmatrix}^t$ and hence the last row cannot be linearly dependent on the rest. Hence, without loss of generality we may assume that the $(2l+2)^{\textnormal{th}}$ row is linearly dependent and the rest of them are linearly independent about $p$. Thus we can transform the matrix into,
\[\begin{pmatrix}
I_{(2l+1)\times (2l+1)} &0\\
C &0\\
0 &1
\end{pmatrix}\]
where $C=\begin{pmatrix}c_1 &\ldots &c_{2l+1}\end{pmatrix}$ is a row vector of functions. Thus we can transform $\omega^i$ into $$\Omega^i = dx_i + c_idx_{2l+2},\forall i=1,\ldots,k=2l+1$$
Clearly, $\{\Theta,\Omega^i\}$ still forms a basis of $\calS_0$. Hence we have $\mu^i=\Omega^1\wedge\ldots\wedge\Omega^k\wedge\Theta\wedge d\Omega^i\ne0$ which gives, $$dx_1\wedge\ldots\wedge dx_{2l+2}\wedge dz\wedge dc_i \ne 0,\forall i$$
But $\{\mu^i\}$ is also point-wise linearly independent. Hence we have that $\{dc_i\}$ are point-wise linearly independent as well i.e., $\{c_i\}$ are coordinate functions around $p$. Set $y_i=x_{i+l+1}$. Thus, we obtain a coordinate system
\[\big(x_1,\ldots,x_{l+1},y_1,\ldots,y_{l+1},z,c_1,\ldots,c_k,q_1,\ldots,q_r\big)\]
around $p$ such that the $1$-forms $\{\Theta,\Omega^i\}$ can be expressed as follows :

\begin{align*}
\Theta &= dz -\sum_{i=1}^{l+1} y_idx_i\\
\Omega^i &=
\begin{cases}
dx_i + c_idy_{l+1}, 1\le i \le l+1\\
dy_{i-l-1} + c_idy_{l+1}, l+2\le i\le k= 2l+1
\end{cases}
\end{align*}
where $r$ is such that $\dim M = r + 2k + 2$, such that $\calE=\ker\Theta$ and $\calD=\{\Omega^1=\ldots\Omega^k=0=\Theta\}$ Note that, by taking $l=0$ in the above normal form we obtain the normal form of the Pfaffian system defining an Engel structure, namely $dz-ydx$, $dx-wdy$.

\section*{Acknowledgment}
The author would like to thank Mahuya Datta, Dheeraj Kulkarni and Suvrajit Bhattacharjee for fruitful and enlightening discussions.

\nocite{hirschBook}
\bibliographystyle{alpha}
\bibliography{../Engel}

\end{document}